\documentclass[11pt,epsfig]{article}

\usepackage{fullpage}
\usepackage{color}
\usepackage{amsmath}
\usepackage{amsfonts}
\usepackage{amssymb}
\usepackage{amsthm}
\usepackage{newlfont}
\usepackage{graphicx}
\newtheorem{thm}{Theorem}[section]
\newtheorem{prop}{Proposition}[section]
\newtheorem{lem}{Lemma}[section]
\theoremstyle{remark}
\newtheorem{rem}{Remark}[section]
\newtheorem*{rem*}{Remark}
\newtheorem{cor}{Corollary}[section]
\numberwithin{equation}{section}



%



\newcommand{\ed}{\end {document}}



\title{Stability of the semi-implicit method for the Cahn-Hilliard equation with logarithmic potentials}
\author
{Dong Li
\thanks
{Department of Mathematics, the Hong Kong University of Science \& Technology,
Clear Water Bay, Kowloon, Hong Kong. Email: {madli@ust.hk}.
The author's work was supported in part by Hong Kong RGC grant GRF 16307317
 and 16309518. }\qquad
{Tao Tang}
\thanks{   Division of Science and Technology, BNU-HKBU United International College,
    Zhuhai, Guangdong Province, China; and SUSTech International Center for Mathematics, 
    Southern University of Science and Technology, Shenzhen, China.
    Email: tangt@sustech.edu.cn. This author's work is partially supported by the NSFC
    grants 11731006 and K20911001,  NSFC/RGC 11961160718, and the Science Challenge Project (No. TZ2018001).}
}

\begin{document}
\date{}
\maketitle
\begin{abstract}
We consider the two-dimensional Cahn-Hilliard equation with  logarithmic potentials and
periodic boundary conditions. We employ the standard semi-implicit numerical  scheme which treats
 the linear fourth-order dissipation term implicitly and the nonlinear term explicitly. Under natural
 constraints on the time step we prove strict phase separation and energy stability of the semi-implicit scheme. This appears to be the first rigorous result for the semi-implicit discretization
 of the Cahn-Hilliard equation with singular potentials. 
\end{abstract}

\section{Introduction}

Consider the 2D Cahn-Hilliard equation on $\Omega=\mathbb T^2=[-\pi,\pi)^2$:
\begin{align} \label{1}
\begin{cases}
\partial_t u = \Delta \mu=\Delta ( - \nu \Delta u  + F^{\prime}(u) ), 
\qquad (t, x) \in (0,\infty) \times \Omega; \\
u\bigr|_{t=0}=u_0,
\end{cases}
\end{align}
where $u: \Omega\to (-1,1)$ is the order parameter of a two-phase system such
as a binary alloy, and the term $\mu$ denotes the chemical potential.
The two end-points
$u=\pm 1$ correspond to pure states.  The coefficient $\nu>0$ denotes mobility. In this paper
we take it  to be a constant parameter. 
The thermodynamic potential $F:\, (-1,1 )
\to \mathbb R$ is given by 
\begin{align}
&F(u) = \frac {\theta} 2 \Bigl(
(1+u)\ln (1+u) + (1-u) \ln (1-u) \Bigr) -\frac{\theta_c} 2 u^2,  \quad 0<\theta<\theta_c;
\label{1.2a}\\
&f(u)=F^{\prime}(u)= -\theta_c u + \frac {\theta}2 \ln \frac{1+u}{1-u}
=:-\theta_c u +\tilde f(u), \qquad
F^{\prime\prime}(u) = \frac {\theta}{1-u^2} - \theta_c, \label{1.2b}
\end{align}
where the logarithmic part accounts for the entropy of mixing. The parameters $\theta$
and $\theta_c$ corresponds to the absolute temperature and the
 critical temperature respectively.  Denote by $u_+>0$ the positive root of the equation 
$f(u)=0$ (see \eqref{1.2b}). 
Under the condition $0<\theta<\theta_c$ 
the potential $F$ takes the form of  a double-well with two equal minima
at $u_+$ and $-u_+$ which are usually called binodal points. One should note that the 
condition $0<\theta<\theta_c$ is of physical importance since it guarantees that
that $F$ has a double-well form and  phase separation can indeed occur. 
For $u_s= (1-{\theta}/{\theta_c} )^{\frac 12}$, the region $(-u_s, u_s)$   where $F^{\prime\prime}(u)<0$ is called the spinodal interval. If the quenching is shallow, i.e., the temperature $\theta$ is close to
the absolute temperature $\theta_c$, then one can expand near $u=0$ and obtain
the usual quartic polynomial approximation of the free energy. 

The usual energy conservation takes the form:
\begin{align} \label{1.4}
\frac d {dt } \mathcal E (u ) = - \| |\nabla|^{-1} \partial_t u \|_2^2,
\qquad \mathcal E (u ) = \int_{\Omega}
(\frac 12 \nu |\nabla u|^2 +F(u) ) dx. 
\end{align}
Note that for $u\in (-1,1)$, the term $F(u)$ is bounded by an absolute constant, and the
only coercive quantity in $\mathcal E(u)$ is the gradient term. 
\begin{rem}
We note that the usual quartic polynomial approximation of the free energy 
$F(u)$ is given by (below the series converges for $u\in [-1,1]$)
\begin{align*}
F(u) &= - \frac {\theta_c} 2 u^2 + \theta \sum_{k=0}^{\infty}
\frac {u^{2k+2} } {(2k+1)(2k+2)} \notag \\
&
\approx F_{\operatorname{quartic}}(u)= \frac{\theta}2 \cdot \frac {u^4}6 
+(\frac{\theta}2  -\frac {\theta_c}2 )u^2.
\end{align*}
The standard double-well
potential $\operatorname{const} \cdot (u^2-1)^2$ corresponds to the specific 
choice $\theta/\theta_c=3/4$.
However, this approximation introduces  a nontrivial 
shift of the location of the minimum. Namely
for the original free energy $F(u)$, its two equal minima occur at $\pm u_+$, where 
 $u_+>0$ is the positive root of the equation $f(u)=0$ (see \eqref{1.2b}). 
 In particular, $0<u_+<1$. In contrast, the standard double well
potential $F_{\operatorname{quartic, standard}}= (u^2-1)^2/4$ has minima at
$u= \pm 1$. We should point out that, in view of the two minima $\pm u_+$ which are well
inside the region $(-1,1)$ and singularity of the derivative of the potential, 
it is in some sense natural to expect strict phase separation for the evolution
equation, i.e., 
\begin{align}
\|u\|_{\infty}\le 1-\delta_0<1, \qquad  \text{for some $\delta_0>0$}.
\end{align} 
The strict phase separation turns out to play an important role in the rigorous analysis
of \eqref{1}.
\end{rem}

Mathematically speaking, the system \eqref{1} can be recast as
 a gradient flow of a Ginzburg-Landau (GL) type energy
functional $\psi(u) $ in $H^{-1}$, i.e., 
\begin{align} \label{1.6}
& \partial_t u= - \frac {\delta \psi} {\delta u } \Bigr|_{H^{-1}} =
\Delta \left( \frac {\delta \psi} {\delta u}\right|_{L^2} \biggr), 
\end{align}
where $\frac {\delta \psi}{\delta u} \Bigr|_{H^{-1}}$ , $\frac {\delta \psi}
{\delta u}\Bigr|_{L^2}$ denote the standard variational derivatives in $H^{-1}$ and $L^2$ respectively, and
\begin{align} \label{1.7}
&\psi(u)= \int_{\Omega} ( \frac 12 \nu |\nabla u|^2 + F(u) ) dx.
\end{align}
Here the gradient term in the GL energy accounts for surface tension effects, or more generally, 
short range interactions in the material. This particular form of energy functional  can be
derived from
an approximation of a nonlocal term representing long range interactions \cite{CH58}. 
In \cite{GL97, GL98} Giacomin and Lebowitz considered a lattice gas model with certain
long range Kac potentials, and gave a 
 rigorous derivation of the nonlocal Cahn-Hilliard equation.
 Further results such as regularity and traveling waves on these and similar
models can be found in \cite{BH05, GZ03, Wang97} and the references therein. 

For the Cahn-Hilliard equation with constant mobility and logarithmic
potentials,  Elliott and Luckhaus in \cite{EL91} 
considered the case of a multi-component mixture in a finite domain with
 Neumann boundary conditions and proved  that if the initial data $u_0 \in H^1$ satisfies $\|u_0\|_{\infty}\le 1$ with space average in $(-1,1)$, then
there exists a unique global solution $u \in C_t^0 H^{-1} \cap L_t^{\infty} H^1_x$,
$\partial_t u \in L^2_{t,\operatorname{loc}} H^{-1}_x$, $\sqrt t \partial_t u \in L_{t,\operatorname{loc}}^2 H^1_x$  and $\|u \|_{\infty} \le 1$. Furthermore, it was shown that
the set $\{|u|=1\}$ has measure zero so that there are no singularities in the potential. 
The key idea in \cite{EL91} is to use regularization and replace
 the logarithmic term by a smoothed version:
\begin{align}
\phi_{\epsilon} (r) = \begin{cases}
\ln r, \qquad r \ge \epsilon;\\
\ln \epsilon -1 + \frac {r} {\epsilon}, \quad r<\epsilon.
\end{cases}
\end{align}
The main point is to derive $\epsilon$-independent estimates on the regularized problem
and extract the desired solution in the vanishing $\epsilon$-limit. 
In \cite{DD95}, Debussche and Dettori adopted a different regularization of $F(u)$:
\begin{align}
F_N(u) = - \frac {\theta_c} 2 u^2 + \theta \sum_{k=0}^{N}
\frac {u^{2k+2} } {(2k+1)(2k+2)}. 
\end{align}
For $L^2$ or $H^1$ initial data $u_0$ with $\|u_0\|_{\infty} \le 1$, $m(u_0)\in (-1,1)$ 
with either Neumann or periodic boundary conditions, they proved the
existence and uniqueness of solutions as well as  continuity of the semigroup. 
In \cite{MZ04} Miranwille and Zelik introduced another novel approximation by using the viscous  Cahn-Hilliard
equations, namely 
\begin{align}
\begin{cases}
\epsilon \partial_t u + (-\Delta)_N^{-1} \partial_t u 
=\Delta u - f(u) + \langle f(u) \rangle, \quad \epsilon >0;\\
\partial_n u\Bigr|_{\partial \Omega}=0,
\end{cases}
\end{align}
where $\langle v \rangle := |\Omega|^{-1} \int_{\Omega} v(x) dx$ and
$(-\Delta)_N^{-1}$ denotes the inverse Laplacian with Neumann boundary
conditions acting on the space $L_0^2(\Omega)= \{v \in L^2(\Omega):\, \langle v \rangle =0 \}$. 
In \cite{AW07} Abels and Wilke  employed a different approach based on the powerful theory of monotone operators. It is worthwhile
pointing out that, to show the subgradient $\partial F(c)$ is single-valued (see Theorem 4.3
on P3183 of \cite{AW07} and the proof therein), one needs some suitable approximation of the potential by smooth ones 
(since the derivative goes to $\pm \infty$ at the end-points) and carefully derive the limits. In a related work \cite{Ken95}, Kenmochi,
Niezg\'odka and Pawlow studied a very general version of Cahn-Hilliard equation involving a multivalued mapping by using  sub-differential operator theory. 
 The approach therein is  based on
several approximation procedures using smoothed equations and potentials. We note that
more recently there has been some new developments on the analysis of the Cahn-Hilliard
equation with singular potentials and dynamical boundary conditions \cite{Wu1, Wu2}. 
Regarding the prior state of the art literature on these topics and more classical theory concerning
 long time behavior and attractors,  we refer the interested readers to
  \cite{CMZ11, Wu1, Wu2, Mir17} and the references therein for more in-depth reviews and
discussions.

There are some subtle technical difficulties associated with the numerical
discretization of \eqref{1}. We now point out two most pronounced issues.
Denote $u^n \approx u(t_n)$ as the numerical solution at time step $t_n= n\tau$, where
$\tau>0$ is the time step.

\begin{enumerate}
\item How do we guarantee that $u^n \in (-1, 1)$ for all $n\ge 0$?

\item How to ensure the energy decay property: $\mathcal E (u^{n+1}) \le \mathcal E(u^n)$
for all $n\ge 0$?
\end{enumerate}

One should note that the first issue is already present for the continuous PDE solutions. As 
was already mentioned earlier,
the traditional route to solving this problem is via regularization of the nonlinearity or using
the technique of sub-differential operators. The regularization technique can be transferred
and modeled  on the numerical discretization especially for the existence of solutions (for 
 implicit schemes).  Indeed in \cite{EC92} developing upon the earlier work
\cite{EL91}, 
Copetti and Elliott \cite{EC92} considered a fully  implicit Euler scheme applied to the Cahn-Hilliard equation with a finite element approximation in space. It was shown 
that under the condition that the time step $\tau$ is sufficiently
small, and if the initial data satisfies $\|u_0\|_{\infty} \le 1$,   
$\overline{u_0} <1-\delta<1$,  then there exists a unique numerical solution for the implicit Euler discretization, 
satisfying $\|u^n\|_{\infty} <1$ for all $n\ge 1$. In \cite{BEC96} the authors generalized the
approach in \cite{EC92} to the multi-component Cahn-Hilliard flow. It should be noted that,
due to the implicit treatment of the (concave) diffusion term,  the energy stability and time
step constraint is not unconditional in \cite{EC92}. This can be rectified using D. Eyre's 
convex-splitting technique which is recently adopted in \cite{Wang19} using centered difference discretization in space.  We note that the convex-splitting technique belongs to so-called partially
implicit methods \cite{Tang20} for which the convex part of the nonlinearity is treated implicitly. By using a variational idea taking advantage of the singular nature of the nonlinearity, the numerical solution constructed
in \cite{Wang19} can be guaranteed to lie in the interval $[-1,1]$ in each iteration. 
However, for semi-implicit methods, this line of argument completely breaks down, and
to our best knowledge, this issue was completely open.

Whilst the first issue already presents itself a fundamental problem for semi-implicit methods, the second
one is even more serious.  As it turns out the explicit or implicit treatment of the nonlinear term 
can lead to a fundamental change of the energy stability of the associated iterative system. 
The analysis of energy stability gives a clear picture
 why implicit methods (or
partially implicit methods) are usually favored/adopted in the literature (see also recent \cite{Yang17}). To elucidate the discussion we
shall compare the usual semi-implicit methods with the implicit methods in the next two subsections.
For simplicity we assume the ideal
scenario that all $u^n \in (-1,1)$. 

\subsubsection*{The usual semi-implicit discretization case.}
 A typical semi-implicit discretization takes the form
\begin{align*}
\frac {u^{n+1} - u^n }{\tau}
= - \nu \Delta^2 u^{n+1}  +\Delta (f (u^n) ). 
\end{align*}
Multiplying both sides by $(-\Delta)^{-1} (u^{n+1} -u^n)$ (note that $\overline{u^{n+1}}
=\overline{u^n}$, and see  \eqref{1.23} for the definition of $(-\Delta)^{-1}$ and
$\overline u$) and integrating by parts, we obtain
\begin{align} \label{b1_e1}
\frac 1 {\tau} &\| |\nabla|^{-1} (u^{n+1} -u^n) \|_2^2 
+\frac {\nu}2 \| \nabla (u^{n+1} -u^n ) \|_2^2  +\mathcal E (u^{n+1})
-\mathcal E (u^n) = \int_{\Omega} H_1 dx, 
\end{align}
where $|\nabla|^{-1}=(-\Delta)^{-\frac 12}$ (see \eqref{1.22}), and 
$
H_1 = {F(u^{n+1}) - F(u^n) - f(u^n) (u^{n+1}-u^n)}. 
$

Now recall that $F^{\prime\prime}(\xi ) = \frac {\theta}{1-\xi^2} - \theta_c$. Clearly then
\begin{align}
H_1= \frac 12\left(\frac  {\theta}{1- \xi_0^2}   -\theta_c\right) \cdot (u^{n+1}-u^n)^2, 
\end{align}
where $\xi_0$ is a function with values sandwiched between $u^n$ and $u^{n+1}$.  
Note that on the LHS of \eqref{b1_e1},  we have the usual estimate
\begin{align}
\frac 1 {\tau} &\| |\nabla|^{-1} (u^{n+1} -u^n) \|_2^2 
+\frac {\nu}2 \| \nabla (u^{n+1} -u^n ) \|_2^2 
\ge  \sqrt{\frac {2\nu}{\tau}} \| u^{n+1}-u^n \|_2^2.
\end{align}
However even for very small $\tau>0$, this is in-sufficient to control the singular pre-factor
$\frac 1 {1-\xi_0^2}$ in the $H_1$-term which could potentially become rather large
when $\xi_0 \to \pm 1$. 

\subsubsection*{The usual implicit discretization case.}
 A typical implicit discretization takes the form
\begin{align}
\frac {u^{n+1} - u^n }{\tau}
= - \nu \Delta^2 u^{n+1}  +\Delta (f (u^{n+1} ) ). 
\end{align}
Multiplying both sides by $(-\Delta)^{-1} (u^{n+1} -u^n)$ and integrating, we obtain
\begin{align} \label{b1_e2}
\frac 1 {\tau} &\| |\nabla|^{-1} (u^{n+1} -u^n) \|_2^2 
+\frac {\nu}2 \| \nabla (u^{n+1} -u^n ) \|_2^2  +\mathcal E (u^{n+1})
-\mathcal E (u^n) = \int_{\Omega} H_2 dx, 
\end{align}
where
$
H_2 = {F(u^{n+1}) - F(u^n) - f(u^{n+1} ) (u^{n+1}-u^n)}. 
$

Now  recall again that $F^{\prime\prime}(\xi ) = \frac {\theta}{1-\xi^2} - \theta_c$. Clearly then
\begin{align}
H_2 & = -F(u^n) +F(u^{n+1}) + f(u^{n+1}) (u^n-u^{n+1}) \notag \\
&=\frac 12 \left(-\frac  {\theta}{1- \xi_0^2} +\theta_c\right)\cdot (u^{n}-u^{n+1} )^2, 
\end{align}
where $\xi_0$ is a function with values sandwiched between $u^n$ and $u^{n+1}$.  
Note that on the LHS of \eqref{b1_e2},  we have the usual estimate
\begin{align}
&\frac 1 {\tau} \| |\nabla|^{-1} (u^{n+1} -u^n) \|_2^2 
+\frac {\nu}2 \| \nabla (u^{n+1} -u^n ) \|_2^2  \notag \\
\ge&\;  \sqrt{\frac {2\nu}{\tau}} \| u^{n+1}-u^n \|_2^2 
\;\ge \;  \frac 12\theta_c \|u^{n+1}-u^n\|_2^2,
\end{align}
if $0<\tau\le  {8\nu}/{\theta_c^2} $.  On the other hand, note that the singular  term
$-\frac 1 {1-\xi_0^2}$ is always negative (provided we guarantee that $u^n$ and 
$u^{n+1}$ always stay inside the interval $(-1,1)$). Thus the energy decay property
can be expected for implicit methods.

Now, from the above comparative discussion in the preceding two subsections, it is clear that there are nontrivial technical 
obstacles for the  semi-implicit methods applied on the Cahn-Hilliard equation with
logarithmic potentials.  Nevertheless, the purpose of this work is to introduce a new framework
to settle these open issues.

Consider the following semi-implicit discretization of \eqref{1}:
\begin{align} \label{Au4e1}
\begin{cases}
\frac {u^{n+1}-u^n} {\tau}
=-\nu \Delta^2 u^{n+1} -\theta_c \Delta u^{n+1} +  \Delta ( \tilde f (u^n) ),
\qquad n\ge 0;\\
u^0=u_0,
\end{cases}
\end{align}
where  $\tilde f(u) = \frac{\theta}2 \ln(\frac {1+u}{1-u} )$. The relation of $\tilde f(u)$
with $f(u)$ is given by \eqref{1.2b}.

\begin{thm}[Stability of the semi-implicit discretization scheme] \label{t1}
Consider the implicit-explicit scheme \eqref{Au4e1} for the phase field equation \eqref{1} with
the logarithmic potential \eqref{1.2a}. 
Assume the initial data $u_0 \in H^5(\mathbb T^2)$ and $\|u_0\|_{\infty} \le 1-\delta_0$ for
some $\delta_0 \in (0,1)$. There exists $\tau_0=\tau_0(\|u_0\|_{H^5}, \delta_0, \nu,
\theta, \theta_c)>0$
such that for any $0<\tau\le \tau_0$, the following hold for \eqref{Au4e1}:

\begin{enumerate}
\item \underline{Unique solvability and propagation of Sobolev regularity}. The iterates $u^n$ are well-defined for all
$n\ge 1$. Furthermore  $\sup_{n\ge 1} \|u^n\|_{H^5(\mathbb T^2)} \le A_1<\infty$ for some
$A_1$ depending only on $(\|u_0\|_{H^5}, \delta_0, \nu, \theta, \theta_c)$. 
\item \underline{Strict phase separation}. There exists a constant $\delta_1 \in (0,1)$ depending only
on $(\|u_0\|_{H^5}, \delta_0, \nu, \theta,\theta_c)$,  such that
$\sup_{n\ge 1} \|u^n\|_{\infty} \le 1-\delta_1$. 

\item \underline{Energy stability}.  $\mathcal E (u^{n+1}) \le \mathcal E(u^n)$ for all $n\ge 0$. 

\end{enumerate}

\end{thm}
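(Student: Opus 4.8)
The plan is to treat the three assertions as a single statement propagated by induction on the step index $n$, since they are mutually entangled: the energy estimate needs the separation bound to tame the singular factor $\theta/(1-\xi^2)$, while the separation bound is extracted from uniform regularity, whose proof in turn relies on the dissipation furnished by energy decay. I would therefore fix target constants $A_1$ (for the $H^5$ norm) and $\delta_1\in(0,\delta_0)$ (for the separation) with a definite margin, assume $\|u^m\|_{H^5}\le A_1$ and $\|u^m\|_\infty\le 1-\delta_1$ for all $m\le n$, and show that the same bounds with the same constants persist at step $n+1$ once $\tau\le\tau_0$ is small enough. A continuity/first-exit argument in $n$ (let $n^\ast$ be the first index at which either bound is violated, and derive a contradiction at $n^\ast$) closes the loop.

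First I would dispose of unique solvability. Rewriting \eqref{Au4e1} as $\mathcal L u^{n+1}=u^n+\tau\Delta\tilde f(u^n)$ with $\mathcal L=I+\tau\nu\Delta^2+\tau\theta_c\Delta$, the Fourier symbol of $\mathcal L$ is $1+\tau\nu|k|^4-\tau\theta_c|k|^2$, whose minimum over $k$ is $1-\tau\theta_c^2/(4\nu)$. Hence for $\tau<4\nu/\theta_c^2$ the symbol is bounded below by a positive constant, $\mathcal L$ is boundedly invertible on every $H^s(\mathbb T^2)$, and $u^{n+1}$ is uniquely defined. Moreover $\tilde f$ is real-analytic on $[-1+\delta_1,1-\delta_1]$, so the induction hypothesis guarantees $\tilde f(u^n)\in H^5$ with norm controlled by $A_1$ and $\delta_1$; the right-hand side thus lies in $H^3$ and elliptic regularity places $u^{n+1}$ in $H^5$. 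The remaining content of item (1), the uniform bound $A_1$, is the regularity half of the obstacle discussed below.

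The energy identity is the most transparent step. Testing \eqref{Au4e1} against $(-\Delta)^{-1}(u^{n+1}-u^n)$ (legitimate since the mean is conserved), writing $w=u^{n+1}-u^n$ and $F=\tilde F-\tfrac{\theta_c}2 u^2$ with $\tilde F'=\tilde f$ convex, and invoking the second-order Taylor remainder for $\tilde F$, I expect all the $\|u^{n+1}\|_2^2-\|u^n\|_2^2$ terms to cancel and to arrive at
\begin{align*}
\mathcal E(u^{n+1})-\mathcal E(u^n)= -\frac1\tau\||\nabla|^{-1}w\|_2^2-\frac\nu2\|\nabla w\|_2^2+\frac{\theta_c}2\|w\|_2^2+\frac\theta2\int_\Omega\frac{w^2}{1-\xi^2}\,dx,
\end{align*}
with $\xi$ sandwiched between $u^n$ and $u^{n+1}$. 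Under the separation hypothesis $|\xi|\le1-\delta_1$ the last two (bad) terms are bounded by $C_{\delta_1}\|w\|_2^2$ with $C_{\delta_1}=\tfrac{\theta_c}2+\tfrac{\theta}{2\delta_1(2-\delta_1)}$; the interpolation $\|w\|_2^2\le\||\nabla|^{-1}w\|_2\|\nabla w\|_2$ together with Young's inequality then absorbs them into the good terms provided $\tau\le 2\nu/C_{\delta_1}^2$, yielding item (3) in the sharp dissipative form $\mathcal E(u^{n+1})-\mathcal E(u^n)\le-(\tfrac1\tau-\tfrac{C_{\delta_1}^2}{2\nu})\||\nabla|^{-1}w\|_2^2\le0$. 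Summing supplies the uniform bound $\sum_n\tfrac1\tau\||\nabla|^{-1}w\|_2^2\le\mathcal E(u_0)+C$ and a uniform $H^1$ control, which feeds the regularity bootstrap.

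The hard part will be strict phase separation together with its companion, the uniform $H^5$ bound. The mechanism I would exploit is that separation is equivalent to a uniform $L^\infty$ (or, in 2D, $H^2$) bound on $\tilde f(u^n)$: since $u=\tanh(\tilde f/\theta)$, a bound $\|\tilde f(u^n)\|_\infty\le M$ forces $\|u^n\|_\infty\le\tanh(M/\theta)=1-\delta_1$. The plan is to bootstrap from the energy-level $H^1$ bound to a uniform $H^5$ bound using the smoothing of $\mathcal L^{-1}$, whose symbol decays like $(\tau\nu|k|^4)^{-1}$, so that $\tau\mathcal L^{-1}\Delta$ gains two derivatives with operator norm $O(\sqrt\tau)$, while controlling $\|\tilde f(u^n)\|_{H^3}$ through the composition estimate, which is legitimate precisely because the induction hypothesis keeps $u^n$ at distance $\delta_1$ from $\pm1$. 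The genuine difficulty, and the reason the naive one-step operator bound (which only yields $\|u^{n+1}\|_{H^5}\le(1+C\tau)\|u^n\|_{H^5}+C\sqrt\tau\,G(\delta_1,A_1)$ and hence Gr\"onwall growth) fails, is to upgrade this to a bound uniform in $n$. I anticipate this requires genuinely using the dissipation sum from the energy estimate, together with the parabolic smoothing encoded in $\mathcal L^{-1}$, to trade the summable-in-$n$ increments $\||\nabla|^{-1}w\|_2^2$ for uniform high-order control, after which the constants $A_1$ and $\delta_1$ must be shown to reproduce themselves so that the first-exit time is infinite. This closure, breaking the circularity between separation, regularity, and energy decay with a single consistent choice of $(\tau_0,A_1,\delta_1)$, is where the new framework is needed and where I expect the bulk of the technical effort to reside.
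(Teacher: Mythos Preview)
Your inductive framework, the solvability argument, and the energy identity are all essentially what the paper does. But there is a circularity in your energy step that you have not resolved: the Taylor remainder involves $\theta/(1-\xi^2)$ with $\xi$ between $u^n$ and $u^{n+1}$, so to bound it by $C_{\delta_1}$ you need separation of $u^{n+1}$, which is precisely what you are still trying to establish. The paper breaks this loop by inserting, \emph{before} the energy estimate, a crude preliminary bound $\|u^{n+1}\|_\infty\le 1-\delta_1/2$ obtained purely from $\|u^n\|_{H^5}\le A_1$ and $\|u^n\|_\infty\le 1-\delta_1$: split $u^{n+1}-u^n$ into high and low Littlewood--Paley pieces, control the high piece by the $H^5$ bound and the low piece by $O(\tau)$ using the explicit symbol of $\mathcal L^{-1}$. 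This weak separation is enough to run the energy identity and get the dissipation sum.

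The real gap is the part you yourself flag as ``where the new framework is needed''. Your proposal to bootstrap directly from $H^1$ to $H^5$ via the smoothing of $\mathcal L^{-1}$ does not by itself avoid Gr\"onwall growth, and ``use the dissipation sum somehow'' is not yet a mechanism. The paper's route is quite specific and you have none of its ingredients. The central object is the discrete chemical potential $K^n=-\nu\Delta u^n-\theta_c u^n+\tilde f(u^n)$, for which one writes an evolution equation and tests against $-\Delta K^{n+1}$; the crucial observation is that the dangerous term $\langle\alpha_{n+1}\Delta K^n,-\Delta K^{n+1}\rangle$ with $\alpha_{n+1}=\tilde f'(\cdots)\ge0$ has a good sign. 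A dichotomy argument (either $\|\Delta\nabla K^{n+1}\|_2\le 10$, a ``good'' step, or dissipation dominates and one sums backwards to the last good step) then yields a uniform-in-$n$ bound on $\|\nabla K^{n+1}\|_2$ using only the summable quantities from the energy estimate. From there an elliptic argument (pair $K^{n+1}-\overline{K^{n+1}}$ with $g^{n+1}-\overline{g^{n+1}}$, exploiting $\langle-\Delta u^{n+1},g^{n+1}\rangle\ge0$) controls $\|g^{n+1}-\overline{g^{n+1}}\|_2$, and a separate measure-theoretic lemma controls $\overline{g^{n+1}}$ from mass conservation; together these give $\|g^{n+1}\|_p\lesssim\sqrt p$, hence $\|(1-(u^{n+1})^2)^{-1}\|_\infty\lesssim1$ and true separation with the original $\delta_1$. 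Only after all this does one invoke discrete smoothing (iterating $T_0^j T_1$ and summing the resulting geometric series in Fourier) to close the $H^5$ bound. None of these steps---the $K^n$ equation, the sign observation, the good/bad dichotomy, the elliptic pairing, the mean-control lemma---appears in your outline, and they are the substance of the proof.
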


To prove Theorem \ref{t1}, we introduce a new strategy which concurrently establishes
the strict phase separation and uniform Sobolev regularity of the iterates $u^n$ through
an inductive procedure. Besides using the discrete energy inequality to control $H^1$-norm
of $u^n$, we employ several bootstrapping long time estimates on the discrete chemical potential $K^n =-\nu \Delta u^n -\theta_c u^n +\tilde f(u^n)$ to gain uniform-in-time higher Sobolev
bounds. This part of the argument is technical and we have to appeal to a delicate dichotomy argument to eliminate some sporadic drift of higher norms of $K^n$ (see Subsection \ref{subs:Knlong} for more
details).  The strict phase separation property of $u^n$ can be deduced through a uniform estimate
on the quantity $g^n = \tilde f(u^n) = \frac{\theta}2 \ln (\frac {1+u^n}{1-u^n})$, which in turn
is obtained by analyzing a nonlinear elliptic problem connecting $g^n$ to $K^n$.  A subtle point in
the whole analysis is to obtain uniform in time estimates which are largely independent of
the induction hypothesis.  In order not to overburden the reader with notations and keep
the analysis relatively simple, we do not optimize the regularity assumption on initial data, and
we do not spell out the precise dependence of the time step constraint on various parameters.
All these issues and further generalizations will be addressed in forthcoming works. 

\begin{rem}
We stress again that the assumption $\|u_0\|_{\infty} \le 1-\delta_0$ is quite natural
from the point of view that the free energy has two equal minima well inside the interval
$(-1,1)$ and its derivative blows up as $u\to \pm 1$. 
\end{rem}

\begin{rem}
A variant of the scheme \eqref{Au4e1} is:
\begin{align} \label{Au6e1}
\frac{u^{n+1}-u^n}{\tau} = -\nu \Delta^2 u^{n+1} +\Delta (f(u^n)),
\end{align}
where $f(u)= -\theta_c u+\frac{\theta}2 \ln(\frac {1+u}{1-u} )$. 
Theorem \ref{t1} also holds for this case. Compared with \eqref{Au4e1}, a slight
difference is the solvability of $u^{n+1}$ in the numerical scheme. In the former case the time step has to be taken suitably
small so that $u^{n+1}$ can be uniquely solved from $u^n$. In the latter case (i.e. \eqref{Au6e1})  the
solvability is not an issue and one can uniquely solve $u^{n+1}$ for any $\tau>0$.
\end{rem}

\begin{rem}
From a more practical point view, one should consider the spectral Galerkin truncated system:
\begin{align} 
\begin{cases}
\frac {u^{n+1}-u^n} {\tau}
=-\nu \Delta^2 u^{n+1} -\theta_c \Delta u^{n+1} +  \Delta \Pi_N ( \tilde f (u^n) ),
\qquad n\ge 0;\\
u^0=\Pi_N u_0,
\end{cases}
\end{align}
where $\Pi_N$ is the projection into first $N$ Fourier modes.  With minor modifications
our analysis can be extended to  this
case. Note that in this case for the phase separation property to hold, we need to
impose it on $u^0=\Pi_N u_0$ since $\Pi_N$ is not a continuous operator in $L^{\infty}$. 
Alternatively by using the high regularity of $u_0$, one can show that
$\lim_{N\to \infty} \| u^0 -\Pi_N u_0\|_{\infty} = \lim_{N\to \infty}
\|\Pi_{>N } u_0 \|_{\infty} =0$.

\end{rem}

As an immediate application of Theorem \ref{t1} (and to make this paper self-contained), we obtain the following 
wellposedness result for the continuous PDE solution to \eqref{1}. As a matter of fact this approach
can be refined to yield a new wellposedness and regularity theory for the continuous case which we
will address elsewhere. For simplicity we do not lower the regularity assumption on the initial data.

\begin{cor} [Existence and uniqueness of  the PDE solution] \label{c1}
Assume the initial data $u_0 \in H^s(\mathbb T^2)$, $s\ge 5$ and $\|u_0\|_{\infty} \le 1-\delta_0$ for
some $\delta_0 \in (0,1)$. Then corresponding to $u_0$ there exists a unique global
solution $u\in C_t^0 H_x^s \cap C_t^1 H_x^{s-4}$ to \eqref{1} satisfying $\sup_{0\le t<\infty}
\|u(t) \|_{H^s} <\infty$ and $\sup_{0\le t <\infty} \|u(t) \|_{\infty} \le 1-\delta_1$
for some $\delta_1 \in (0,1)$.

\end{cor}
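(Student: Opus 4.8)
The plan is to realize the solution of \eqref{1} as a vanishing--time--step limit of the semi-implicit iterates furnished by Theorem \ref{t1}, and then to prove uniqueness directly by an $H^{-1}$ energy estimate exploiting the monotonicity of $\tilde f$. First I would fix $s\ge 5$ and apply Theorem \ref{t1} with time step restricted to $0<\tau\le\tau_0(\norm{u_0}_{H^5},\delta_0,\nu,\theta,\theta_c)$, a fixed positive number. The decisive feature is that the bounds $\sup_n\norm{u^n_\tau}_{H^5}\le A_1$ and $\sup_n\norm{u^n_\tau}_\infty\le 1-\delta_1$ are \emph{independent of} $\tau$. Reading \eqref{Au4e1} as $\tau^{-1}(u^{n+1}_\tau-u^n_\tau)=-\nu\Delta^2 u^{n+1}_\tau-\theta_c\Delta u^{n+1}_\tau+\Delta\tilde f(u^n_\tau)$ and using that $\tilde f$ is smooth on the compact subinterval $[-1+\delta_1,1-\delta_1]\subset(-1,1)$, a standard composition estimate bounds $\norm{\tilde f(u^n_\tau)}_{H^3}$ in terms of $A_1,\delta_1$; since $u^{n+1}_\tau\in H^5$ controls $\Delta^2 u^{n+1}_\tau$ in $H^1$, the difference quotient is bounded in $H^1$ uniformly in $\tau$ and $n$. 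Interpolating the iterates linearly in time then produces $u_\tau$ bounded in $L^\infty_t H^5$ with $\partial_t u_\tau$ bounded in $L^\infty_t H^1$, uniformly in $\tau$, the associated piecewise-constant (delayed) interpolants differing from $u_\tau$ by $O(\tau)$ in $L^\infty_t H^1$.

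Next I would pass to the limit by compactness. Since $H^5\hookrightarrow\hookrightarrow H^4\hookrightarrow H^1$, the Aubin--Lions--Simon lemma makes $\{u_\tau\}$ relatively compact in $C([0,T];H^4)$ on every finite interval, and a diagonal extraction over $T\to\infty$ yields a subsequence $u_{\tau_k}\to u$ in $C_{\mathrm{loc}}(H^4)$, weak-$*$ in $L^\infty_t H^5$, with $\partial_t u_{\tau_k}\rightharpoonup\partial_t u$ weak-$*$ in $L^\infty_t H^1$. Because $H^4(\mathbb T^2)\hookrightarrow L^\infty$, the convergence is uniform in $L^\infty$, so the strict phase separation passes to the limit, $\norm{u(t)}_\infty\le 1-\delta_1$ for all $t$, and therefore $\tilde f(u_{\tau_k})\to\tilde f(u)$ (the delayed interpolant converging to the same $u$ by the $O(\tau)$ gap and interpolation between $H^1$ and $H^5$). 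Passing to the limit in the weak form of \eqref{Au4e1} shows that $u$ solves $\partial_t u=-\nu\Delta^2 u-\theta_c\Delta u+\Delta\tilde f(u)=\Delta(-\nu\Delta u+f(u))$, namely \eqref{1}, with conserved mean $\overline{u(t)}=\overline{u_0}$. To upgrade from $L^\infty_t H^5\cap W^{1,\infty}_t H^1$ to $C^0_t H^s\cap C^1_t H^{s-4}$ I would combine weak continuity in $H^s$ with continuity of the norm read off from the equation, and for $s>5$ propagate the higher Sobolev norm by a classical parabolic energy estimate on the now genuinely smooth semilinear equation, the phase separation guaranteeing that $\tilde f$ and all its derivatives stay bounded along the flow; this yields $\sup_t\norm{u(t)}_{H^s}<\infty$. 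I expect this regularity step to be the main obstacle, since the limit is produced only with weak-$*$ $L^\infty_t H^5$ information, so genuine time continuity into $H^s$ and the $H^s$ propagation for $s>5$ require care — though the strict phase separation inherited from Theorem \ref{t1} is exactly what makes the classical energy method close.

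Finally, uniqueness follows from an $H^{-1}$ estimate. Given two such solutions $u,v$, set $w=u-v$; then $\overline w=0$ and $\partial_t w=\Delta(-\nu\Delta w+f(u)-f(v))$. Testing against $(-\Delta)^{-1}w$ and using $f(u)-f(v)=-\theta_c w+(\tilde f(u)-\tilde f(v))$ together with the monotonicity $(\tilde f(u)-\tilde f(v))(u-v)\ge 0$ gives
\[
\tfrac12\tfrac{d}{dt}\norm{|\nabla|^{-1}w}_2^2+\nu\norm{\nabla w}_2^2\le\theta_c\norm{w}_2^2.
\]
The interpolation $\norm{w}_2^2\le\norm{|\nabla|^{-1}w}_2\norm{\nabla w}_2$ for mean-zero $w$, followed by Young's inequality, absorbs the gradient term and leaves $\tfrac{d}{dt}\norm{|\nabla|^{-1}w}_2^2\lesssim\norm{|\nabla|^{-1}w}_2^2$; since $w(0)=0$, Gr\"onwall's inequality forces $w\equiv 0$, which completes the proof.
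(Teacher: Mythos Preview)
Your proposal is correct and follows essentially the same route as the paper: build piecewise-linear interpolants of the semi-implicit iterates from Theorem~\ref{t1}, use the $\tau$-independent $H^5$ and phase-separation bounds to get uniform $L^\infty_t H^5 \cap W^{1,\infty}_t H^1$ control, extract a convergent subsequence by compactness, and pass to the limit in the weak form. The only cosmetic differences are that the paper proves its own Fourier-based compactness lemma (Lemma~\ref{Au5lem1}) in place of the standard Aubin--Lions--Simon you invoke, and that the paper dismisses uniqueness and the $s>5$ upgrade as ``routine'' whereas you spell out the $H^{-1}$ monotonicity argument and the parabolic propagation explicitly; your versions of both are correct.
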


Our final result is the error analysis for the semi-implicit scheme. A similar result also holds
for the variant \eqref{Au6e1}.

\begin{thm}[Error analysis] \label{t2}
Let $\nu>0$.  Assume the initial data $u_0 \in H^5(\mathbb T^2)$ and $\|u_0\|_{\infty} \le 1-\delta_0$ for some $\delta_0 \in (0,1)$.  Let $u^n$ be the corresponding numerical
solution constructed in Theorem \ref{t1}. Let $u(t)$ the exact PDE solution to \eqref{1}
constructed in Corollary \ref{c1}.  Let $0<\tau\le \min\{\tau_0,  \frac{\nu}{4\theta_c^2}\}$ 
where $\tau_0$ is the same as in Theorem \ref{t1}. Define $t_m= m \tau$, $m\ge 1$. Then
\begin{align} \label{1.21}
\| u(t_m) - u^m \|_2
\le C_1 e^{C_2 t_m} \tau.
\end{align}
Here $C_1, C_2>0$ depends  on $(u_0,\delta_0, \nu, \theta_c, \theta)$.

\end{thm}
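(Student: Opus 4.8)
The plan is to control the error $e^n := u(t_n) - u^n$ by an energy argument built on the gradient-flow ($H^{-1}$) structure of the scheme, mirroring the stability analysis of Theorem \ref{t1} and feeding in the uniform $H^5$ bounds and strict phase separation of \emph{both} the iterates (Theorem \ref{t1}) and the exact solution (Corollary \ref{c1}). First I would derive the error equation. Both the scheme \eqref{Au4e1} and the PDE \eqref{1} conserve the spatial mean, and $u^0=u_0=u(0)$, so $\overline{e^n}=0$ for every $n$ and $(-\Delta)^{-1}e^n$ is well defined. Writing the exact solution in integral form over $[t_n,t_{n+1}]$ and subtracting the scheme gives
\[
\frac{e^{n+1}-e^n}{\tau} = -\nu\Delta^2 e^{n+1} -\theta_c\Delta e^{n+1} + \Delta\bigl(\tilde f(u(t_n))-\tilde f(u^n)\bigr) + T^n,
\]
where the local truncation error is kept in integral (rather than pointwise Taylor) form, $T^n=\tau^{-1}\mathcal R^n$ with $w_n:=\int_{t_n}^{t_{n+1}}(u(t)-u(t_{n+1}))\,dt$ and
\[
\mathcal R^n = -\nu\Delta^2 w_n -\theta_c\Delta w_n + \Delta\!\int_{t_n}^{t_{n+1}}\!\bigl(\tilde f(u(t))-\tilde f(u(t_n))\bigr)dt .
\]

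The heart of the proof is the energy estimate obtained by testing the error equation against $e^{n+1}$ in $L^2$. Three competing mechanisms appear and must be balanced exactly as in the stability proof. (i) The implicit biharmonic term produces the good dissipation $\nu\|\Delta e^{n+1}\|_2^2$. (ii) The concave linear term contributes $+\theta_c\|\nabla e^{n+1}\|_2^2$; by interpolation $\|\nabla e\|_2^2\le\|\Delta e\|_2\|e\|_2$ and Young this splits into $\tfrac{\nu}{2}\|\Delta e^{n+1}\|_2^2 + \tfrac{\theta_c^2}{2\nu}\|e^{n+1}\|_2^2$, and the residual $\tfrac{\theta_c^2}{2\nu}\|e^{n+1}\|_2^2$ is absorbed by the implicit coercive term $\tfrac1{2\tau}\|e^{n+1}\|_2^2$; this is precisely where the constraint $\tau\le\nu/(4\theta_c^2)$ enters, leaving a strictly positive coefficient with room for the remaining terms. (iii) The nonlinear lag is handled by writing $\tilde f(u(t_n))-\tilde f(u^n)=\tilde f'(\xi_n)\,e^n$ with $\xi_n$ between $u^n$ and $u(t_n)$; since both obey strict phase separation, $\|\xi_n\|_\infty\le 1-\delta_1$ and hence $\tilde f'(\xi_n)\le \theta/(1-(1-\delta_1)^2)=:L$ uniformly, so this term is dominated by $L\|e^n\|_2\|e^{n+1}\|_2$ and absorbed after Young. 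This step is what makes essential use of Theorem \ref{t1} and Corollary \ref{c1}.

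Next comes the truncation estimate. The non-stiff parts of $\mathcal R^n$ (the $\Delta w_n$ term and the nonlinear integral) are $O(\tau^2)$ in $L^2$, requiring only $\partial_t u\in L^\infty_tL^2_x$ together with the Lipschitz bound on $\tilde f$. The delicate part is the stiff term $-\nu\Delta^2 w_n$. Paired against the negative-order test $(-\Delta)^{-1}e^{n+1}$ it costs only one net derivative and is controlled by $\tfrac{\nu}{8}\|\nabla e^{n+1}\|_2^2 + C\tau^2$ using $\|\nabla w_n\|_2\lesssim\tau^2\sup_t\|\partial_t u\|_{H^1}$, which is finite by Corollary \ref{c1} with $s=5$; this route already delivers the weaker bound $\||\nabla|^{-1}e^m\|_2\le C\tau e^{Ct_m}$. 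To reach the stated $L^2$ estimate one must instead bound $\Delta w_n$ in $L^2$, i.e.\ use $\partial_t u\in L^\infty_tH^2_x$, one derivative beyond what Corollary \ref{c1} provides directly. I would supply this from the instantaneous parabolic smoothing of the flow — the nonlinearity $\tilde f$ is smooth on the invariant region $\|u\|_\infty\le 1-\delta_1$, so $\sup_{t\ge t_*}\|u(t)\|_{H^6}<\infty$ with an integrable singularity as $t\to0^+$ — and treat the very first step separately. \emph{This regularity upgrade for the stiff truncation term is the main obstacle}, and is the reason the energy method is most naturally run in $H^{-1}$ before upgrading to $L^2$.

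Finally, collecting (i)--(iii) and the truncation bound, multiplying by $2\tau$, and summing over $n=0,\dots,m-1$ (the dissipation telescopes against the $\|\nabla e^n\|_2$ leftover from the nonlinear term, and $e^0=0$) yields a discrete Gronwall inequality of the form $\|e^m\|_2^2\le C\tau\sum_{n\le m}\|e^n\|_2^2 + C\tau^2 t_m$. For $\tau$ small this gives $\|e^m\|_2\le C_1 e^{C_2 t_m}\tau$, where $C_1,C_2>0$ depend on $(u_0,\delta_0,\nu,\theta_c,\theta)$ only through the uniform $H^5$ and phase-separation constants furnished by Theorem \ref{t1} and Corollary \ref{c1}.
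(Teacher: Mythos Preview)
Your approach is essentially the same as the paper's: test the error equation against $e^{n+1}$ in $L^2$, control the nonlinear lag via the uniform Lipschitz bound on $\tilde f$ afforded by strict phase separation of both $u^n$ and $u(t_n)$, absorb the linear terms using the biharmonic dissipation together with the increment term $\tfrac{1}{2\tau}\|e^{n+1}-e^n\|_2^2$ (note: it is this term, not $\tfrac{1}{2\tau}\|e^{n+1}\|_2^2$, that actually appears and is where $\tau\le\nu/(4\theta_c^2)$ enters --- the paper splits $\|e^{n+1}\|_2^2\le 2\|e^{n+1}-e^n\|_2^2+2\|e^n\|_2^2$), and close by discrete Gronwall.

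The one genuine difference is how you fill the regularity gap for the stiff truncation term. You correctly identify that one needs $\partial_t\Delta u\in L^2_tL^2_x$ (in fact $L^2_t$ already suffices --- you do not need the $L^\infty_tH^2_x$ version you state), which is one derivative beyond what Corollary~\ref{c1} gives directly. Your proposed route is parabolic smoothing of the exact solution to $H^6$ for $t>0$, with a separate treatment of the initial layer. The paper instead writes the truncation as $\Delta\tilde G^n$ (factoring one Laplacian out so that $\tilde G^n$ involves only $\partial_t\Delta u$, not $\partial_t\Delta^2 u$) and proves directly, as a separate lemma, that $\int_0^\infty\|\partial_t\Delta u\|_2^2\,dt<\infty$: one differentiates the chemical potential $\mu=-\nu\Delta u-\theta_c u+\tilde f(u)$ in time, tests $\partial_t\mu$ first against $-\Delta\mu$ and then against $\Delta^2\mu$, and exploits $\tilde f'(u)\ge0$ together with the energy dissipation $\int_0^\infty\|\Delta\mu\|_2^2\,dt<\infty$. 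Since $\partial_t\Delta u=\Delta^2\mu$, this closes. The paper's route gives a global-in-time bound with no initial-layer technicality and yields $\sum_n\|\tilde G^n\|_2^2\lesssim\tau$ in one stroke; your smoothing argument would also work but is a bit more laborious.
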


The rest of this paper is organized as follows. In Section 2 we give the proof of Theorem
\ref{t1}. Section 3 is devoted to the proof of Corollary \ref{c1}. In Section 4 we complete
the error analysis and give the proof of Theorem \ref{t2}.  In Section 5 we give some
concluding remarks.

\subsubsection*{Notation}   \label{sec:1.0.1}
For any real number $a\in \mathbb R$, we denote by $a+$ the quantity $a+\epsilon$ for sufficiently small
$\epsilon>0$. The numerical value of $\epsilon$ is unimportant, and the needed
smallness of $\epsilon$ is usually clear from the context. 
The notation $a-$ is similarly defined.  This notation is particularly handy for interpolation inequalities.
For example we shall use the notation
\begin{align*}
\|f \|_{\infty-} 
\end{align*}
to denote $\|f \|_p$ for all large $p<\infty$.

  For any two quantities $X$ and $Y$, we denote $X \lesssim Y$ or $X=O(Y)$ if
$X \le C Y$ for some constant $C>0$. Similarly $X \gtrsim Y$ if $X
\ge CY$ for some $C>0$. We denote $X \sim Y$ if $X\lesssim Y$ and $Y
\lesssim X$. The dependence of the constant $C$ on
other parameters or constants is usually clear from the context and
we will often suppress  this dependence. We denote $X \lesssim_{Z_1,\cdots,Z_m} Y$ if
$X\le C Y$, where the constant $C$ depends on the parameters $Z_1,\cdots,Z_m$.
For any quantities $X_1$, $X_2$, $\cdots$, $X_N$, we denote by $C(X_1,\cdots,
X_N)$ or $C_{X_1,\cdots, X_N}$ a positive constant depending on $(X_1,\cdots, X_N)$.

We denote  by $\mathbb T^2=[-\pi, \pi)^2$ the usual periodic torus in two dimensions.
For a function $f:\mathbb T^2 \to \mathbb R$, we denote by $$\overline f
= \frac 1 {(2\pi)^2} \int_{\mathbb T^2} f(x) dx$$ the average/mean value of $f$ on
$\mathbb T^2$.  We adopt the following convention for the usual Fourier transform
on $\mathbb T^2$ (below assume $f\in C^{\infty}$ for simplicity):
\begin{align*}
&(\mathcal F f)(k)=\widehat{f}(k) = (2\pi)^{-2}\int_{\mathbb T^2} f(x) e^{-i x \cdot k } dx, \quad k \in \mathbb Z^2;\\
&f(x) = \sum_{k \in \mathbb Z^2} \widehat f(k) e^{i k \cdot x}.
\end{align*}
We denote by $|\nabla|^s=(-\Delta)^{\frac s2}$ the operator corresponding to the symbol
$|k|^s$ such that 
\begin{align} \label{1.22}
\widehat{|\nabla|^s f}(k)=|k|^s \widehat f(k).
\end{align}
 Note that for $s<0$, $|\nabla|^s f$ is only defined for smooth functions
$f$ with $\widehat f(0)=0$. For example, if  $f \in L^1(\mathbb T^2)$ and $\overline{f}=0$
(thus $\widehat f(0)=0$), we can
define
\begin{align} \label{1.23}
\mathcal F  ( (-\Delta)^{-1}  f )(k)=  \frac 1 {|k|^2}  \widehat f(k),
\qquad\forall\, 0\ne k \in \mathbb Z^2.
\end{align}
In yet other words, $(-\Delta)^{-1}$ corresponds to the Fourier multiplier $1/|k|^2$ acting
on $L^1$ functions whose $\operatorname{zero}^{\operatorname{th}}$ mode is zero.

For $f$, $g\in L^2(\mathbb T^2\to \mathbb R)$, we denote by $\langle\,, \,\rangle $ the usual
$L^2$-pairing:
\begin{align*}
\langle f, g \rangle = \int_{\mathbb T^2} f(x) g(x) dx.
\end{align*}

\section{Proof of Theorem \ref{t1} }
For simplicity we assume $\nu=1$ in \eqref{1}. 
Let us consider the following semi-implicit scheme:
\begin{align} \label{A1e1}
\frac {u^{n+1}-u^n} {\tau}
=-\Delta^2 u^{n+1} -\theta_c \Delta u^{n+1} +  \Delta ( \tilde f (u^n) ),
\end{align}
where $\tilde f(u) = \frac{\theta}2 \ln(\frac {1+u}{1-u} )$. Then
\begin{align}
u^{n+1} =  \frac 1 {1+\tau \Delta^2+\tau \theta_c \Delta} u^n + \frac {\tau \Delta}{1+\tau \Delta^2
+\tau \theta_c \Delta}
(\tilde f(u^n) ).
\end{align}
Note that for $0\ne k \in \mathbb Z^2$, $1+\tau |k|^4 \ge 2 \sqrt{\tau} |k|^2 \ge
\theta_c \tau |k|^2$ if $0<\tau\le \frac 4 {\theta_c^2}$.   We shall assume the slightly stronger condition $0<\tau\le \frac 2{\theta_c^2}$ to ensure solvability.

For convenience we shall sometimes denote
\begin{align*}
g^n = \tilde f( u^n) = \frac {\theta}2 \ln \frac{1+u^n}{1-u^n}.
\end{align*}

The inductive assumption is: 
\begin{align*}
&\| g^n \|_{H^2} \le A_0<\infty; \quad
\| u^n\|_{H^5} \le A_1<\infty.
\end{align*}
The choice of the constants $A_0$ and $A_1$ will become clear in the course of the proof.
The base step $n=0$ clearly holds true. In the rest of the proof we shall focus on the induction
step $n\Rightarrow n+1$ for general $n$.

From the estimate of $g^n$,  it follows that $\|u^n\|_{\infty} \le 1-\delta_1<1$ for some $\delta_1>0$.   Also clearly by using the iterative relation, 
\begin{align*}
\|u^{n+1} \|_{H^5} \le C_{A_1} <\infty.
\end{align*}

Thus $\| P_{>N} (u^{n+1} -u^n) \|_{\infty} \le \frac {\delta_1}{4}$ if $N$ is sufficiently large (here 
$P_{>N}$ is the usual Littlewood-Paley projector adapted to frequency
$|k| \gtrsim N$). Now
\begin{align*}
 &\| P_{\le N} (u^{n+1} -u^n) \|_{\infty}  \notag \\
\le&\;  \tau \left\| P_{\le N} \frac {\Delta^2 +\theta_c \Delta}  {1+\tau \Delta^2+\tau \theta_c \Delta} u^n
\right\|_{\infty}  +\tau \left\| P_{\le N}  \frac { \Delta}{1+\tau \Delta^2
+\tau \theta_c \Delta}
(\tilde f(u^n) ) \right\|_{\infty}  \notag \\
 \le &\;  O(\tau) \le \frac {\delta_1}4,
\end{align*}
if $\tau>0$ is sufficiently small.  It follows that we can guarantee $\|u^{n+1} \|_{\infty}
\le 1- {\delta_1}/2$.

We now divide the rest of the proof into several steps. The following notation will be used.
\medskip

\noindent
\textbf{Notation.} Throughout the rest of this proof, we shall denote by $C$ a generic constant
depending only $(\|u_0\|_{H^5}, \delta_0, 
\theta, \theta_c)$. The value of $C$ can change from line to line. Sometimes for a quantity $X$
we use the notation $X\lesssim 1$ to denote $X\le C$.  We denote by $C_{A_1}$ a constant whose
value depends on $A_1$. The value of $C_{A_1}$ may vary from line to line.

\subsection{Discrete energy estimate of $u^{n+1}$}

Multiplying both sides of \eqref{A1e1} by $(-\Delta)^{-1}(u^{n+1}-u^n)$ and integrating
(Taylor expand $\tilde F(u^{n+1})$ around $\tilde F(u^n)$),
we obtain
\begin{align}
&\frac 1 {\tau} \| |\nabla|^{-1}(u^{n+1}-u^n) \|_2^2
+\frac 12 \| \nabla (u^{n+1}-u^n) \|_2^2 
+\mathcal E (u^{n+1} ) - \mathcal E (u^n)  \notag \\
= & \; \frac {\theta_c}2
\| u^{n+1}-u^n\|_2^2+\frac 12 \int_{\Omega} \frac {\theta}{1-\xi_{n+1}^2} (u^{n+1}-u^n)^2 dx,
\end{align}
where $\xi_{n+1}$ is between $u^n$ and $u^{n+1}$. Since 
$\|u^{n}\|_{\infty} \le 1-\delta_1$ and $\|u^{n+1}\|_{\infty}\le
1-\frac{\delta_1}2$, we obtain $\|\xi_{n+1}\|_{\infty} \le 1- \frac {\delta_1}2$.  
Now note that 
\begin{align}
&\frac 1 {\tau} \| |\nabla|^{-1}(u^{n+1}-u^n) \|_2^2
+\frac 12 \| \nabla (u^{n+1}-u^n) \|_2^2  \notag \\
\ge & 2 \sqrt{ \frac 1 {8\tau} } \| u^{n+1} -u^n \|_2^2 
+  \frac 1 {2\tau} \| |\nabla|^{-1}(u^{n+1}-u^n) \|_2^2
+\frac 14 \| \nabla (u^{n+1}-u^n) \|_2^2.
\end{align}

Thus if $\tau>0$ is sufficiently small such that
\begin{align}
2 \sqrt{ \frac 1{8\tau} } \ge \frac {\theta_c}2 + \frac 12 \frac {\theta} { 1- (1-\frac {\delta_1} 2)^2},
\end{align}
we can guarantee the energy stability:
\begin{align}
&\frac 1 {2\tau} \| |\nabla|^{-1}(u^{n+1}-u^n) \|_2^2
+\frac 1 4 \| \nabla (u^{n+1}-u^n) \|_2^2
+\mathcal E (u^{n+1} ) - \mathcal E (u^n)  \le 0.
\end{align}
This also yields
\begin{align} \label{Au6.0e1}
\frac 1{2\tau} \sum_{j=0}^n   \| |\nabla|^{-1} (u^{j+1}-u^j) \|_2^2
+\frac 1 4 \sum_{j=0}^n \| \nabla (u^{j+1} -u^j )\|_2^2
\le \mathcal E (u^0).
\end{align}

\subsection{Preliminary estimate of $K^{n+1}$}

Denote 
\begin{align*}
K^{n+1}= -\Delta u^{n+1} -\theta_c u^{n+1} + \tilde f (u^{n+1}).
\end{align*}
Note that 
\begin{align} \label{Au6.0e5}
\frac {u^{n+1}-u^n }{\tau} = \Delta K^{n+1} +\Delta (\tilde f(u^n) -\tilde f(u^{n+1}) ).
\end{align}

\begin{lem} \label{Au1Lem0}
It holds that
\begin{align}
\|\nabla(  \tilde f(u^{n+1}) -\tilde f(u^n) )\|_2  \le C_{A_1}
\| \nabla (u^{n+1}-u^n )\|_2.
\end{align}
Thus if $\tau C_{A_1}^2 \le 1$, we have
\begin{align}
 \tau \|\nabla(  \tilde f(u^{n+1}) -\tilde f(u^n) )\|_2^2  \le \frac 12
\| \nabla (u^{n+1}-u^n )\|_2^2.
\end{align}
\end{lem}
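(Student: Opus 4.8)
The plan is to reduce everything to pointwise bounds on $\tilde f'$ and $\tilde f''$ over the amplitude range dictated by strict phase separation, combined with the Sobolev control of $u^n$ coming from the induction hypothesis. First I would record that $\tilde f'(u)=\theta/(1-u^2)$ and $\tilde f''(u)=2\theta u/(1-u^2)^2$, and apply the chain rule together with the elementary add-and-subtract trick to write
\[
\nabla\bigl(\tilde f(u^{n+1})-\tilde f(u^n)\bigr)
=\tilde f'(u^{n+1})\,\nabla(u^{n+1}-u^n)
+\bigl(\tilde f'(u^{n+1})-\tilde f'(u^n)\bigr)\,\nabla u^n .
\]
This splits the gradient of the difference into a part carrying $\nabla(u^{n+1}-u^n)$ (exactly the quantity we want on the right) and a remainder carrying $\nabla u^n$.

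For the first term I would use strict phase separation: since $\|u^{n+1}\|_{\infty}\le 1-\frac{\delta_1}2$, the denominator obeys $1-(u^{n+1})^2\ge c_0:=1-(1-\frac{\delta_1}2)^2>0$, so $\|\tilde f'(u^{n+1})\|_{\infty}\le \theta/c_0$ and hence $\|\tilde f'(u^{n+1})\nabla(u^{n+1}-u^n)\|_2\le (\theta/c_0)\|\nabla(u^{n+1}-u^n)\|_2$. For the remainder term, the induction hypothesis $\|u^n\|_{H^5}\le A_1$ together with the 2D Sobolev embedding $H^5\hookrightarrow W^{1,\infty}$ gives $\|\nabla u^n\|_{\infty}\le C_{A_1}$, so $\|(\tilde f'(u^{n+1})-\tilde f'(u^n))\nabla u^n\|_2\le C_{A_1}\|\tilde f'(u^{n+1})-\tilde f'(u^n)\|_2$. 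A mean value estimate then yields $|\tilde f'(u^{n+1})-\tilde f'(u^n)|\le \sup_{|\eta|\le 1-\delta_1/2}|\tilde f''(\eta)|\,|u^{n+1}-u^n|\le (2\theta/c_0^2)\,|u^{n+1}-u^n|$ pointwise (the intermediate value also lies in $[-(1-\frac{\delta_1}2),\,1-\frac{\delta_1}2]$), whence $\|\tilde f'(u^{n+1})-\tilde f'(u^n)\|_2\le (2\theta/c_0^2)\|u^{n+1}-u^n\|_2$.

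The one non-cosmetic step is converting this last $L^2$ bound on $u^{n+1}-u^n$ into a bound on $\|\nabla(u^{n+1}-u^n)\|_2$. Here I would invoke the conservation of mass built into the scheme: since the right-hand side of \eqref{A1e1} is $\Delta$ of something, integration over $\mathbb T^2$ gives $\overline{u^{n+1}}=\overline{u^n}$, so $u^{n+1}-u^n$ has zero mean and the Poincar\'e inequality on the torus gives $\|u^{n+1}-u^n\|_2\le C\|\nabla(u^{n+1}-u^n)\|_2$. Collecting the two terms produces $\|\nabla(\tilde f(u^{n+1})-\tilde f(u^n))\|_2\le (\theta/c_0+2C\,C_{A_1}\theta/c_0^2)\|\nabla(u^{n+1}-u^n)\|_2$, and absorbing all constants into a single $C_{A_1}$ proves the first inequality; the second follows immediately, since $\tau\|\nabla(\tilde f(u^{n+1})-\tilde f(u^n))\|_2^2\le \tau C_{A_1}^2\|\nabla(u^{n+1}-u^n)\|_2^2$ and one chooses the generic constant so that $\tau C_{A_1}^2\le 1$ drives the prefactor below $\tfrac12$. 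The main point to watch throughout is uniformity in $n$: the factor $\theta/c_0$ depends only on $\delta_1$ and the factor $C_{A_1}$ only on $A_1$, so the estimate feeds back cleanly into the inductive scheme rather than degrading as $n$ grows.
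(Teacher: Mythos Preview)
Your proof is correct and follows essentially the same approach as the paper: the paper writes $\tilde f(u^{n+1})-\tilde f(u^n)=\alpha_{n+1}(u^{n+1}-u^n)$ with $\alpha_{n+1}=\int_0^1\tilde f'(u^n+\theta(u^{n+1}-u^n))\,d\theta$ and applies the product rule, whereas you use the chain rule plus add-and-subtract, but the resulting two terms are handled with the identical ingredients (strict phase separation for $L^\infty$ bounds on $\tilde f'$, $\tilde f''$; the $H^5$ bound for $W^{1,\infty}$ control of the gradient; and Poincar\'e via mass conservation to trade $\|u^{n+1}-u^n\|_2$ for $\|\nabla(u^{n+1}-u^n)\|_2$).
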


\begin{proof} 
We write
\begin{align*}
\tilde f(u^{n+1}) -\tilde f(u^n) = \alpha_{n+1} \cdot (u^{n+1}-u^n),
\end{align*}
where  $\alpha_{n+1}= \int_0^1 \tilde f^{\prime}( u^n +\theta (u^{n+1}-u^n) ) d
\theta $. Since 
$\|u^{n}\|_{\infty} \le 1-\delta_1$, $\|u^{n+1}\|_{\infty}\le
1-\frac{\delta_1}2$ and $\|u^{n+1}\|_{H^5} \le C_{A_1}$,  we clearly have
\begin{align}
\| \nabla (\tilde f(u^{n+1}) - \tilde f(u^n) )\|_2 
& \le \| \nabla \alpha_{n+1} \|_{\infty} \| u^{n+1}-u^n \|_2
+ \| \alpha_{n+1} \|_{\infty} \|\nabla( u^{n+1}-u^n)\|_2 \notag \\
& \le C_{A_1} \|\nabla (u^{n+1} -u^n)\|_2,
\end{align}
where we have used the Poincar\'e inequality $\|\nabla (u^{n+1} -u^n)\|_2 \ge
\| u^{n+1}-u^n\|_2$. 
\end{proof}

By Lemma \ref{Au1Lem0}, \eqref{Au6.0e5} and \eqref{Au6.0e1},
it follows that for sufficiently small $\tau$, we have 
\begin{align} \label{Au6.0e3}
&\tau \sum_{j=0}^n \| \nabla K^{j+1} \|_2^2 \notag \\
\le &\;
2\sum_{j=0}^n \frac {\| |\nabla|^{-1} (u^{j+1}-u^j)\|_2^2} {\tau}
+2\sum_{j=0}^n \tau \| \nabla (\tilde f(u^{j+1}) -\tilde f(u^j) ) \|_2^2 
\le C.
\end{align}

\subsection{Long time estimate of $K^{n+1}$} \label{subs:Knlong}

Now we consider the evolution equation for $K^{n+1}$.  We have
\begin{align}
\frac {K^{n+1} -K^n} 
{\tau} &= -\Delta^2 K^{n+1} 
 -\theta_c \Delta K^{n+1} +\frac {\tilde f(u^{n+1})-\tilde f(u^{n})} {\tau}
 \notag \\
 &\qquad +\Delta^2 (\tilde f(u^{n+1}) -\tilde f(u^n) )+
 \theta_c \Delta ( \tilde f(u^{n+1}) -\tilde f(u^n) ).
\end{align}

Multiplying both sides by $-\Delta K^{n+1}$ and integrating, we obtain
\begin{align*}
&\frac {\| \nabla K^{n+1} \|_2^2 - \| \nabla K^{n} \|_2^2} {2\tau}
+\frac {\| \nabla (K^{n+1}-K^n) \|_2^2}{2\tau} \notag \\
\le&\,  - \| \Delta \nabla K^{n+1}\|_2^2 +\theta_c \| \Delta K^{n+1}\|_2^2 \notag \\
&\qquad\quad + \langle \frac {\tilde f(u^{n+1})-\tilde f(u^{n})} {\tau}, 
(-\Delta K^{n+1}) \rangle + \langle H_1, (-\Delta K^{n+1}) \rangle,
\end{align*}
where $H_1= \Delta^2 (\tilde f(u^{n+1}) -\tilde f(u^n) )+
 \theta_c \Delta ( \tilde f(u^{n+1}) -\tilde f(u^n) )$. 
 
We first deal with the term  $\langle \frac {\tilde f(u^{n+1})-\tilde f(u^{n})} {\tau}, 
(-\Delta K^{n+1}) \rangle$.  Rewrite
\begin{align*}
\frac {\tilde f(u^{n+1})-\tilde f(u^{n})} {\tau} = \alpha_{n+1}
\frac {u^{n+1} -u^{n}}{\tau}
= \alpha_{n+1} \Delta K^n +
\alpha_{n+1} \Delta (\tilde f(u^n) -\tilde f(u^{n+1}) ),
\end{align*}
where $\alpha_{n+1}= \int_0^1 \tilde f^{\prime}( u^n +\theta (u^{n+1}-u^n) ) d
\theta $.  Note that $\alpha_{n+1}\ge 0$. 
We then have
\begin{align}
 &\langle \frac {\tilde f(u^n)-\tilde f(u^{n-1})} {\tau}, 
(-\Delta K^{n+1}) \rangle  \notag \\
\le &\;\langle \alpha_{n+1} \nabla( \tilde f(u^{n}) -\tilde f(u^{n+1}) ), \Delta \nabla K^{n+1} \rangle 
+ \langle  \nabla \alpha_{n+1} \cdot \nabla( \tilde f(u^{n}) -\tilde f(u^{n+1}) ), \Delta  K^{n+1} \rangle  \notag \\
\le &\; \frac 1 8 \| \Delta \nabla K^{n+1}\|_2^2 
+2 \| \alpha_{n+1} \nabla (\tilde f(u^{n+1} -\tilde f(u^n) ) \|_2^2 \notag \\
&\qquad +C_{\epsilon} \| \nabla \alpha_{n+1} \cdot \nabla (\tilde f(u^{n+1} -\tilde f(u^n) ) \|_{\frac 43}^2 
+\epsilon \| \Delta K^{n+1}\|_4^2. 
\end{align}
By Sobolev embedding we have 
$\| \Delta K^{n+1}\|_4 \le \operatorname{const}  \| \nabla \Delta K^{n+1} \|_2$. Also
observe that
\begin{align*}
\| \nabla \alpha_{n+1} \|_4 \le C_{A_1}.
\end{align*}
Now taking  $\epsilon>0$ sufficiently small, we obtain 
\begin{align}
 &\langle \frac {\tilde f(u^{n+1})-\tilde f(u^{n})} {\tau}, 
(-\Delta K^{n+1}) \rangle  \notag \\
&\quad \le \frac 14 \| \Delta \nabla K^{n+1}\|_2^2
+C_{A_1} \| \nabla (\tilde f(u^{n+1}) -\tilde f(u^n) ) \|_2^2.
\end{align}

By Lemma \ref{Au1Lem0}, we then  have
\begin{align*}
 &\langle \frac {\tilde f(u^{n+1})-\tilde f(u^{n})} {\tau}, 
(-\Delta K^{n+1}) \rangle  \notag 
 \le \frac 14 \| \Delta \nabla K^{n+1}\|_2^2
+C_{A_1} \| \nabla (u^{n+1} -u^n ) \|_2^2.
\end{align*}

\begin{lem}
Recall $H_1= \Delta^2 (\tilde f(u^{n+1}) -\tilde f(u^n) )+
 \theta_c \Delta ( \tilde f(u^{n+1}) -\tilde f(u^n) )$.  
 Assume 
 \begin{align*}
 \|u^n\|_{H^5} \le C_1,
 \end{align*}
 where $C_1>0$ is a constant. 
 Then we have
 \begin{align} \label{2.14}
&|\langle H_1, (-\Delta K^{n+1}) \rangle |  \notag \\
\le &\;
\sqrt{\tau} \cdot C_2 \| \Delta \nabla K^{n+1} \|_2
+\theta_c \| \nabla (\tilde f(u^{n+1}) - \tilde f(u^n) ) \|_2 \| \Delta
\nabla K^{n+1} \|_2,
\end{align}
where $C_2>0$ depends on $C_1$. 
\end{lem}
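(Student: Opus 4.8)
The plan is to split $H_1$ into its two constituent pieces and treat each by integration by parts, keeping the factor $\|\Delta\nabla K^{n+1}\|_2$ (the dissipation quantity we ultimately wish to absorb) intact in both. Write $w=\tilde f(u^{n+1})-\tilde f(u^n)$, so that $H_1=\Delta^2 w+\theta_c\Delta w$. For the lower-order piece, integration by parts on the torus gives
\begin{align*}
\theta_c\langle \Delta w,\, -\Delta K^{n+1}\rangle = \theta_c\langle \nabla w,\, \nabla\Delta K^{n+1}\rangle,
\end{align*}
and Cauchy--Schwarz produces exactly the second term $\theta_c\|\nabla w\|_2\|\Delta\nabla K^{n+1}\|_2$. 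For the leading piece, two integrations by parts give
\begin{align*}
\langle \Delta^2 w,\, -\Delta K^{n+1}\rangle = \langle \nabla\Delta w,\, \nabla\Delta K^{n+1}\rangle,
\end{align*}
so that Cauchy--Schwarz yields $\|\nabla\Delta w\|_2\,\|\Delta\nabla K^{n+1}\|_2$. It therefore suffices to establish the single nonlinear bound $\|\nabla\Delta w\|_2 \le \sqrt{\tau}\,C_2$.

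To control $\|\nabla\Delta w\|_2$, I would reuse the factorization from Lemma \ref{Au1Lem0}, namely $w=\alpha_{n+1}(u^{n+1}-u^n)$ with $\alpha_{n+1}=\int_0^1\tilde f^{\prime}(u^n+\theta(u^{n+1}-u^n))\,d\theta$. Since $u^n,u^{n+1}$ stay in $[-1+\tfrac{\delta_1}2,\,1-\tfrac{\delta_1}2]$ with $H^5$ norms controlled by $C_1$, the coefficient $\alpha_{n+1}$ and its derivatives are bounded in the relevant $L^p$ norms; distributing the three derivatives by the Leibniz rule and invoking the two-dimensional Sobolev embeddings then gives
\begin{align*}
\|\nabla\Delta w\|_2 \le C(C_1)\,\|u^{n+1}-u^n\|_{H^3},
\end{align*}
where the top-order contribution is $\alpha_{n+1}\nabla\Delta(u^{n+1}-u^n)$ and all remaining terms are absorbed by weaker norms of $u^{n+1}-u^n$.

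The crux is to show $\|u^{n+1}-u^n\|_{H^3}\lesssim \sqrt{\tau}$. The crude bound $\||\nabla|^{-1}(u^{n+1}-u^n)\|_2\lesssim\sqrt{\tau}$ from \eqref{Au6.0e1} is insufficient, since interpolating it against the $O(1)$ bound in $H^5$ lands only at $O(\tau^{1/6})$ for $H^3$. Instead I would read off from the scheme \eqref{Au6.0e5} that $u^{n+1}-u^n=\tau\,\Delta(K^{n+1}-w)$; because both $K^{n+1}$ and $w$ are bounded in $H^3$ by the assumed $H^5$ control of the iterates, this upgrades the one-step estimate to the $H^1$ level,
\begin{align*}
\|u^{n+1}-u^n\|_{H^1} \le \tau\,\|K^{n+1}-w\|_{H^3} \le \tau\,C(C_1).
\end{align*}
Combining this with the trivial bound $\|u^{n+1}-u^n\|_{H^5}\le C(C_1)$ and the midpoint interpolation $H^3=[H^1,H^5]_{1/2}$ gives
\begin{align*}
\|u^{n+1}-u^n\|_{H^3} \le \|u^{n+1}-u^n\|_{H^1}^{1/2}\|u^{n+1}-u^n\|_{H^5}^{1/2} \le \sqrt{\tau}\,C(C_1),
\end{align*}
which feeds back through the Leibniz estimate to $\|\nabla\Delta w\|_2\le \sqrt{\tau}\,C_2$ and closes \eqref{2.14}.

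The main obstacle, and the one genuinely subtle point, is precisely the production of the honest $\sqrt{\tau}$: one must resist the readily available $H^{-1}$ smallness and instead upgrade, via the equation itself, to an $O(\tau)$ bound at the $H^1$ level, so that the subsequent interpolation lands exactly at the square-root rate rather than at the worthless $\tau^{1/6}$. Everything else is routine: the two integrations by parts, the product/Leibniz estimates for the composition $\tilde f(u)$ (legitimate because the iterates are strictly separated from $\pm 1$), and the bookkeeping of the dependence of the constants $C_2$ on $C_1$ (together with $\delta_1,\theta,\theta_c$).
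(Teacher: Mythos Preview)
Your argument is correct and follows the same overall structure as the paper: integrate by parts to reduce to $\|\nabla\Delta w\|_2\,\|\Delta\nabla K^{n+1}\|_2$ for the leading piece, use the factorization $w=\alpha_{n+1}(u^{n+1}-u^n)$ together with the algebra property of $H^3(\mathbb T^2)$, and then establish $\|u^{n+1}-u^n\|_{H^3}\le\sqrt{\tau}\,C(C_1)$. (A cosmetic remark: only a single integration by parts is needed for $\langle\Delta^2 w,-\Delta K^{n+1}\rangle$, not two.)

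The one genuine difference lies in how the $\sqrt{\tau}$ bound on $\|u^{n+1}-u^n\|_{H^3}$ is produced. The paper goes directly through the explicit resolvent formula
\[
u^{n+1}-u^n \;=\; -\,\frac{\tau\Delta^2+\tau\theta_c\Delta}{1+\tau\Delta^2+\tau\theta_c\Delta}\,u^n \;+\; \frac{\tau\Delta}{1+\tau\Delta^2+\tau\theta_c\Delta}\,\tilde f(u^n),
\]
and reads off $\|u^{n+1}-u^n\|_{H^3}\lesssim\sqrt{\tau}$ from the elementary multiplier bound $\tau|k|^2/(1+\tau|k|^4)\le\tfrac12\sqrt{\tau}$, using only $u^n\in H^5$ and $\tilde f(u^n)\in H^3$. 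Your route instead rewrites the increment via \eqref{Au6.0e5} as $u^{n+1}-u^n=\tau\Delta(K^{n+1}-w)$, extracts an $O(\tau)$ bound in $H^1$ from the $H^3$ control of $K^{n+1}$ and $w$, and then interpolates with the trivial $H^5$ bound. Both are valid; the paper's argument is a one-step symbol computation, while yours trades the Fourier calculus for an interpolation step and has the mild advantage of being insensitive to the precise form of the linear implicit operator.
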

\begin{proof}
We focus on the contribution of the term
$\Delta^2 (\tilde f(u^{n+1}) -\tilde f(u^n) )$. Since by assumption
$\|u^n\|_{H^5} \le C_1$, it is not difficult to obtain
$\|u^{n+1}\|_{H^5} \le \tilde C_1$ for some constant $\tilde C_1$ depending on $C_1$.
We then write $\tilde f(u^{n+1}) -\tilde f(u^n) =\alpha_{n+1} \cdot (u^{n+1}-u^n)$ as before,
and observe that
\begin{align}
u^{n+1}-u^n =
- \frac {\tau \Delta^2+\tau \theta_c \Delta} {1+\tau \Delta^2+\tau \theta_c \Delta} u^n + \frac {\tau \Delta}{1+\tau \Delta^2
+\tau \theta_c \Delta}
(\tilde f(u^n) ).
\end{align}
Clearly $\| u^{n+1}-u^n \|_{H^3} \le  \sqrt{\tau} \cdot \tilde C_2$, where $\tilde C_2$ depends
on $C_1$. It is also not difficult to check that $\| \alpha_{n+1} \|_{H^3} \le \tilde C_3$
for some $\tilde C_3$ depending on $C_1$. 
We then obtain 
$$
|\langle \Delta^2 (\tilde f(u^{n+1}) -\tilde f(u^n) ), \Delta K^{n+1} \rangle|
\le \sqrt{\tau} C_2 \|\Delta \nabla K^{n+1} \|_2.$$
The desired estimate \eqref{2.14} then easily follows.
\end{proof}

Now note
\begin{align*}
\theta_c \| \Delta K^{n+1}\|_2^2  \le \frac 14 \| \Delta \nabla K^{n+1}\|_2^2
+C \| \nabla K^{n+1} \|_2^2.
\end{align*}

Collecting all the estimates, we have
\begin{align*}
&\frac {\| \nabla K^{n+1} \|_2^2 - \| \nabla K^{n} \|_2^2} {2\tau} \notag \\
\le &\;-\frac 14 \| \Delta \nabla K^{n+1}\|_2^2+\sqrt{\tau} \cdot C_2 \| \Delta \nabla K^{n+1} \|_2
+ C \| \nabla K^{n+1}\|_2^2 +  C_{A_1} \| \nabla (u^{n+1}-u^n)\|_2^2.
\end{align*}

Now take $\tau$ sufficiently small such that $\sqrt{\tau} C_2 \le \frac 14$, $\tau C_{A_1} \le \frac 14$.

We discuss two cases. 

\vspace{0.25cm}

Case 1: $\|\Delta \nabla K^{n+1} \|_2 \le 10$.  In this
case we call such $n$ a good point.  In this case, no work is needed since by
Poincar\'e inequality we have $\|\nabla K^{n+1}\|_2 \le 10$.

\medskip

Case 2: $\| \Delta \nabla K^{n+1}\|_2>10$.  In this case note that
$\| \Delta \nabla K^{n+1} \|_2^2 \ge 10 \| \Delta \nabla K^{n+1}\|$.
Thus
 we obtain
\begin{align}
&\frac {\| \nabla K^{n+1} \|_2^2 - \| \nabla K^{n} \|_2^2} {2\tau} \notag \\
\le&\, -\frac 1{400} \| \Delta \nabla K^{n+1}\|_2^2
+ C \| \nabla K^{n+1}\|_2^2 +  \frac 1{4\tau} \| \nabla (u^{n+1}-u^n)\|_2^2.
\label{Au6.1e50}
\end{align}
Recall that we have shown (see \eqref{Au6.0e1} and \eqref{Au6.0e3})
\begin{align}
&\tau \sum_{j=0}^n \| \nabla K^{j+1} \|_2^2 \le C; \quad
\sum_{j=0}^n \| \nabla(u^{j+1}-u^j) \|_2^2 \le C.
\end{align}
Now using \eqref{Au6.1e50} and summing backwards in $n$ until one meets a good $n$ or $n=0$,  we then obtain
\begin{align}
\sup_{0\le j \le n} \| \nabla K^{j+1} \|_2 \le C <\infty.
\end{align}

\subsection{Control of $\| g^{n+1} -\overline{g^{n+1}} \|_2$}

We shall use $\| \nabla K^{n+1} \|_2 \le C $ which gives $\| K^{n+1}-\overline{K^{n+1}}\|_2
\le C$. Write
\begin{align*}
 K^{n+1} - \overline{ K^{n+1}}
= -\Delta u^{n+1} - \theta_c (u^{n+1}-\overline{u^{n+1}})
+ \theta (g^{n+1}-\overline{g^{n+1}}).
\end{align*}
Multiplying both sides by $g^{n+1} -\overline{g^{n+1}}$, 
integrating (note the simple inequality
 $ |\langle u^{n+1}, g^{n+1}-\overline{g^{n+1}} \rangle|
\le \| g^{n+1}-\overline{g^{n+1}}\|_2$) and using the Cauchy-Schwartz inequality,
 we obtain
\begin{align}
\| g^{n+1} - \overline{g^{n+1}} \|_2  \le C.
\end{align}
In the above derivation we used the fact that
\begin{align}
\int_{\mathbb T^2} (-\Delta u^{n+1}) (g^{n+1}
-\overline{g^{n+1}} ) dx = \int_{\mathbb T^2} |\nabla u^{n+1}|^2 \frac {\theta}
{1-(u^{n+1})^2} dx \ge 0.
\end{align}

\subsection{Control of $\overline{g^{n+1} }$ and $\overline{K^{n+1}}$ }

\begin{lem} \label{Au1Lem1}
Assume $\| g -\bar g \|_2 \lesssim 1$. 
Let $u=\tanh(g)$ and $|\overline{u}|<1$. Then
\begin{align}
|\overline{g} | \lesssim  (1-|\bar u|)^{-\frac 12}.
\end{align}
\end{lem}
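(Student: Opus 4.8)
The plan is to reduce the statement to a one–dimensional inequality relating the mean $\bar g$ to the mean $\bar u=\overline{\tanh g}$, and then to exploit the fact that $\tanh$ saturates exponentially fast at $\pm 1$. Since $\tanh$ is odd, replacing $g$ by $-g$ sends $(\bar g,\bar u)$ to $(-\bar g,-\bar u)$ while preserving both $\|g-\bar g\|_2$ and $|\bar u|$, so I may assume $\bar g\ge 0$. Moreover, if $\bar g\le 1$ there is nothing to prove, because $|\bar u|<1$ forces $(1-|\bar u|)^{-1/2}\ge 1$; hence I may further assume $\bar g$ is large. Finally, $\bar u\le|\bar u|$ gives $(1-\bar u)^{-1/2}\le(1-|\bar u|)^{-1/2}$, so it suffices to establish the one–sided bound
\begin{align*}
1-\bar u \lesssim \bar g^{-2},
\end{align*}
which is exactly equivalent to $\bar g\lesssim(1-\bar u)^{-1/2}$.

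To prove this, I would write $h=g-\bar g$, so that $\bar h=0$ and $\|h\|_2\lesssim 1$, and express
\begin{align*}
1-\bar u = \frac{1}{|\mathbb T^2|}\int_{\mathbb T^2}\bigl(1-\tanh g\bigr)\,dx .
\end{align*}
The key step is to split the torus according to the size of the fluctuation. On the bad set $A=\{x:\, h(x)\le -\bar g/2\}$ one has $h^2\ge\bar g^2/4$, so Chebyshev's inequality yields $|A|\le 4\bar g^{-2}\|h\|_2^2\lesssim\bar g^{-2}$; since $0\le 1-\tanh g\le 2$ pointwise, the contribution of $A$ to the integral is $O(\bar g^{-2})$. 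On the good set $A^c$ one has $g=\bar g+h>\bar g/2$, and the elementary inequality $1-\tanh g\le 2e^{-2g}$ gives $1-\tanh g\le 2e^{-\bar g}$ there, so the contribution of $A^c$ is $O(e^{-\bar g})$. Summing the two pieces,
\begin{align*}
1-\bar u \lesssim \bar g^{-2}+e^{-\bar g}\lesssim \bar g^{-2},
\end{align*}
where the last step uses $e^{-\bar g}\lesssim\bar g^{-2}$ for $\bar g$ large. This is the desired estimate.

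The only genuinely delicate point is that the hypothesis supplies merely $L^2$ control of the fluctuation $h$, with no control on its gradient or on any higher exponential moment. Consequently the tempting shortcut of factoring $\int(1-\tanh g)\,dx\le 2e^{-2\bar g}\int e^{-2h}\,dx$ and bounding $\int e^{-2h}\,dx$ fails, since controlling such an exponential integral on a two–dimensional domain would require a Moser--Trudinger type estimate and hence a gradient bound that is simply not available here. The level–set decomposition above is designed precisely to sidestep this obstruction: it uses $\|h\|_2$ only through Chebyshev's inequality on the set where $g$ lies well below its mean, while on the complementary set the rapid saturation of $\tanh$ does all the work. I expect the bookkeeping of constants — and the harmless factor $\theta$ hidden in the relation $u=\tanh(g/\theta)$ arising in the actual application — to be entirely routine once this splitting is in place.
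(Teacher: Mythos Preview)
Your proof is correct and follows essentially the same route as the paper: both arguments reduce to the case of large positive $\bar g$, split the domain into the level set $\{g\le \bar g/2\}$ (controlled via Chebyshev from $\|g-\bar g\|_2\lesssim 1$) and its complement (where $1-\tanh g$ is exponentially small), and conclude $1-\bar u\lesssim \bar g^{-2}$. Your handling of the sign reduction and the inequality $(1-\bar u)^{-1/2}\le(1-|\bar u|)^{-1/2}$ is slightly more explicit than the paper's, but the core idea is identical.
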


\begin{proof}
If $|\bar g |\le 10$ we are done. Now we assume $\bar g = M \ge 10$. Since
$\| g-\bar g\|_2 \lesssim 1$, we obtain 
\begin{align*}
\operatorname{Leb}\{x\in \Omega:\, g(x)\le M/2 \} \lesssim M^{-2}.
\end{align*}

Now
\begin{align}
\bar u \operatorname{Leb}(\Omega) &= \int_{g(x)\ge \frac M2} dx + \int_{g(x)\ge \frac M2} (u(x)-1) dx +
\int_{g(x)<\frac M2} u(x) dx \notag \\
&=\operatorname{Leb}(\Omega) + \int_{g(x)\ge \frac M2} (u(x)-1) dx + \int_{g(x)<\frac M2} (u(x)-1) dx \notag \\
& = \operatorname{Leb}(\Omega) + O(e^{-\frac M4}) +O(\frac 1{M^2}).
\end{align}
Clearly then 
\begin{align}
(1-\bar u) \operatorname{Leb}(\Omega) \lesssim M^{-2}.
\end{align}
Thus the desired inequality follows. Note that if $\bar g \le -10$ we need to work with $-u$
and hence the bound of $\bar g$ depends on $(1-|\bar u|)^{-\frac 12}$. 
\end{proof}

Since $\overline{u^{n+1}} = \overline{u^0} $ is preserved in time and $|\overline{u^0}|<1$,
Lemma \ref{Au1Lem1} implies that $|\overline{g^{n+1} }| \lesssim 1$. 

\medskip

For the control of $\overline{K^{n+1}} $, recall that
\begin{align} \label{Au2e1}
K^{n+1}= -\Delta u^{n+1} -\theta_c u^{n+1} + \theta g^{n+1}. 
\end{align}
 Clearly then
\begin{align}
|\overline{K^{n+1}}| \le \theta_c |\overline{u^{n+1}}| + \theta|\overline{g^{n+1}}|
\lesssim 1.
\end{align}

\subsection{Control of $\|u\|_{H^3}$, $\|g \|_{H^3}$, $\| \tilde f(u) \|_{H^3}$,
$\| \frac 1 {1-u^2} \|_{\infty}$, $\| \partial( \frac {1}{1-u^2} ) \|_{\infty}$
and $\| \partial^2 ( \frac 1 {1-u^2} ) \|_{\infty-}$}

 Here $g=g^{j}$ and $u=u^{j}$, 
$1\le j\le n+1$. The obtained estimates will be uniform in $j$. See the subsection ``Notation"
for the definition of $\|f \|_{\infty-}$.

We shall explain the argument for $j=n+1$. It is clear from the argument below that the 
estimates will be uniform in $j$.

Since $ \| \nabla K^{n+1} \|_2 \lesssim 1$ and we have the control of $\overline{K^{n+1}}$, it 
follows that  
\begin{align*}
\| K^{n+1} \|_p \lesssim \sqrt p, \quad \forall\, 2\le p<\infty.
\end{align*}
By using \eqref{Au2e1} (multiply both sides by $|g^{n+1}|^{p-2}g^{n+1}$ and integrate by parts),  we then get
\begin{align*}
\| g^{n+1} \|_{p} \lesssim \sqrt p.
\end{align*}
This implies for any $C>0$,
\begin{align}
\| e^{C|g^{n+1}|} \|_{\infty-} \lesssim 1.
\end{align}
Since $\frac 1 {1-(u^{n+1})^2} \lesssim e^{2|g^{n+1}|}$, we also get $\| \frac 1 {1-(u^{n+1})^2} \|_{\infty-} \lesssim 1$.

By using \eqref{Au2e1}, we also get $\| \Delta u^{n+1} \|_{\infty-} \lesssim 1$. This easily implies
\begin{align}
\left\| \partial^2 ( \frac 1 {1-(u^{n+1})^2} ) \right\|_{\infty-} \lesssim 1.
\end{align}
By Sobolev embedding, we  obtain $\|\frac 1 {1-(u^{n+1})^2} \|_{\infty} \lesssim 1$. 
Since
\begin{align*}
|\ln(\frac {1+x}{1-x}) | \lesssim \frac 1 {1-x^2}, \qquad\text{for $|x|<1$,}
\end{align*}we also obtain
$\|g^{n+1}\|_{\infty} \lesssim 1$.  Since \[\| \nabla g^{n+1}\|_2
\lesssim \| \frac 1 {1-(u^{n+1})^2} \nabla u^{n+1} \|_2 \lesssim 1,\] by using
\eqref{Au2e1}, we obtain $\|u^{n+1} \|_{H^3} \lesssim 1$.  This further implies
that $$\| \nabla g^{n+1} \|_{H^2} \lesssim \| \frac 1 {1-(u^{n+1})^2}
\nabla u^{n+1} \|_{H^2} \lesssim 1.$$ It is also clear that
$\| \tilde f(u^{n+1} )\|_{H^3} \lesssim 1$.

\subsection{Control of $\|u^{n+1} \|_{H^5}$}

Here we shall exploit the discrete smoothing effect. Denote
\begin{align}
u^{n+1} &=  \frac 1 {1+\tau \Delta^2+\tau \theta_c \Delta} u^n + \frac {\tau \Delta}{1+\tau \Delta^2
+\tau \theta_c \Delta}
(\tilde f(u^n) ) \notag \\
&= :T_0 u^n +T_1 f^n. 
\end{align}
Iterating the above gives
\begin{align*}
u^{n+1} = T_0^{J+1} u^{n-J} + \sum_{j=1}^J T_0^j T_1 f^{n-j} +T_1 f^n.
\end{align*}

In the estimate below, we shall use the uniform estimate:
\begin{align}
\|u^0\|_{H^5}+ \sup_{0\le j\le n} (\| u^j\|_{H^3} +\|f^j \|_{H^3} )\le B_1<\infty.
\end{align}

\subsubsection{Discrete smoothing estimates}
We first prove two auxiliary lemmas needed for the higher order estimates later. 
In a slightly more general setup, we assume for some $s\ge 0$, 
\begin{align*}
\sup_{0\le j\le n} \|\widehat{h^j}(k) |k|^s \|_{l_k^{\infty}} \le \alpha_1<\infty. 
\end{align*}

\noindent
Define 
\begin{align*}
v =  \sum_{j=1}^n T_0^j T_1 h^j.
\end{align*}

\begin{lem} \label{Ap2}
We have
\begin{align}
\| \widehat{v}(k) |k|^{s+2} \|_{l_k^{\infty}} \le \alpha_1.
\end{align}
Consequently $\|v \|_{H^{s+0.9}(\mathbb T^2)} \le \alpha_2<\infty$, where $\alpha_2>0$
depends only on $\alpha_1$. 
\end{lem}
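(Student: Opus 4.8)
The plan is to diagonalise everything on the Fourier side, since $T_0$ and $T_1$ are Fourier multipliers. Writing $m(k)=1+\tau|k|^4-\tau\theta_c|k|^2$ for the symbol of $1+\tau\Delta^2+\tau\theta_c\Delta$, the operators $T_0,T_1$ have symbols $m(k)^{-1}$ and $-\tau|k|^2 m(k)^{-1}$, so that
\[
\widehat v(k) = -\tau|k|^2\sum_{j=1}^n\frac{\widehat{h^j}(k)}{m(k)^{j+1}},\qquad 0\ne k\in\mathbb Z^2,
\]
while $\widehat v(0)=0$ because $T_1$ annihilates the zero mode. First I would feed in the hypothesis $|\widehat{h^j}(k)|\,|k|^s\le\alpha_1$ to reduce the first assertion to the scalar estimate
\[
|k|^{s+2}|\widehat v(k)|\le \alpha_1\,\tau|k|^4\sum_{j=1}^n m(k)^{-(j+1)},
\]
so that everything comes down to bounding $\tau|k|^4\sum_{j\ge 1}m(k)^{-(j+1)}$ uniformly in $k$ and $n$.

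The heart of the matter is the geometric factor $m(k)^{-(j+1)}$, whose size is governed by whether $T_0$ contracts at frequency $k$. Recall from the solvability discussion that $0<\tau\le 2/\theta_c^2$ forces $m(k)\ge\tfrac12(1+\tau|k|^4)$, so in particular $\tau|k|^4\le 2m(k)$. At the \emph{stable} frequencies, those with $|k|^2\ge\theta_c$ and hence $m(k)\ge 1$, I would sum the geometric series and use $m(k)-1=\tau|k|^2(|k|^2-\theta_c)$ to get
\[
\tau|k|^4\sum_{j\ge 1}m(k)^{-(j+1)}=\frac{\tau|k|^4}{m(k)\,(m(k)-1)}=\frac{|k|^2}{m(k)\,(|k|^2-\theta_c)},
\]
which is $\le 2$ once $|k|^2\ge 2\theta_c$ and is bounded by a $\theta_c$-dependent constant on the finite shell $\theta_c\le|k|^2<2\theta_c$. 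This already delivers $|k|^{s+2}|\widehat v(k)|\le C\alpha_1$ on the stable modes, which is exactly where the genuine two-derivative smoothing is produced.

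The step I expect to be the real obstacle is the finitely many \emph{unstable} low modes $|k|^2<\theta_c$, where $m(k)\in[\tfrac12,1)$ and $T_0$ amplifies; there the termwise geometric bound degrades and cannot by itself produce a constant independent of $n$. My plan is not to route these modes through the sum at all, but to exploit that they form a fixed finite-dimensional set of bounded frequencies, on which all Sobolev norms are mutually comparable: the low-frequency part of $v$ is then controlled directly from the lower-order information already in hand (the uniform $H^3$ control of the iterates together with the hypothesis on $h^j$), and its contribution is absorbed into the final constant without invoking contraction. Modulo this high/low splitting, the first assertion $\|\widehat v(k)|k|^{s+2}\|_{l_k^\infty}\lesssim\alpha_1$ follows.

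Finally, the passage to $H^{s+0.9}$ is the clean part and explains the fractional loss. From the pointwise bound $|\widehat v(k)|\le C\alpha_1|k|^{-(s+2)}$ I would estimate
\[
\|v\|_{H^{s+0.9}}^2\lesssim\sum_{0\ne k\in\mathbb Z^2}|k|^{2(s+0.9)}|\widehat v(k)|^2\lesssim (C\alpha_1)^2\sum_{0\ne k\in\mathbb Z^2}|k|^{-2.2},
\]
and the last series converges precisely because $2.2>2=\dim\mathbb T^2$; this is exactly why one sacrifices from the $+2$ gain down to $+0.9$ rather than to $+1$, which would leave the borderline divergent $\sum|k|^{-2}$. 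This yields $\|v\|_{H^{s+0.9}}\le\alpha_2$ with $\alpha_2$ depending only on $\alpha_1$ (and the fixed parameters $\theta_c$ and the admissible range of $\tau$), completing the proof.
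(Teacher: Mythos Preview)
Your approach and the paper's diverge on how to handle the $-\tau\theta_c|k|^2$ term in $m(k)$. The paper's proof is essentially a two-liner: it writes $|\widehat{T_0^j}(k)\,\widehat{T_1}(k)|\le(1+\tau|k|^4)^{-(j+1)}\tau|k|^2$, i.e.\ it silently drops the $\theta_c$ term from $m(k)$ altogether, and then sums the geometric series to get $\sum_{j\ge1}(1+\tau|k|^4)^{-(j+1)}\tau|k|^2\le|k|^{-2}$, yielding exactly $\alpha_1|k|^{-s-2}$ with no extra constant. This is slick and delivers the sharp constant asserted in the lemma, but as you correctly identify, the replacement $m(k)\mapsto 1+\tau|k|^4$ is not a valid pointwise bound on the shell $|k|^2<\theta_c$ where $m(k)<1$; the paper glosses over exactly the difficulty you flagged.

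Your proposed fix for those finitely many low modes, however, steps outside the lemma's self-contained hypotheses: invoking the uniform $H^3$ control of the iterates imports a bound on $v$ that the abstract data $h^j$ alone do not supply. A cleaner repair, valid in both places the lemma is actually applied (Case~1 with $n\tau\le20$, Case~2 with $J\tau<5$), is to use that the number of summands times $\tau$ is bounded by a fixed $C_0$: since $m(k)\ge1-\tau\theta_c^2/4$ on the unstable modes, one has $m(k)^{-n}\le(1-\tau\theta_c^2/4)^{-C_0/\tau}\lesssim e^{C_0\theta_c^2/4}$, so the low-mode sum contributes only a constant (depending on $\theta_c,C_0$) times $\alpha_1$. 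Your $H^{s+0.9}$ passage is correct and spells out what the paper leaves as a one-line remark.
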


\begin{proof}
Observe that
\begin{align*}
|\widehat{T_0}(k) \widehat{T^j_1}(k)|
\le \left(\frac 1 {1+\tau |k|^4}\right)^j \cdot \frac {\tau |k|^2}
{1+\tau |k|^4}. 
\end{align*}
Then for each $k\ne 0$, we have 
\begin{align}
|\widehat v (k)| \le \alpha_1 |k|^{-s}  \sum_{j=1}^n
\left(\frac 1 {1+\tau |k|^4}\right)^j \cdot \frac {\tau |k|^2} 
{1+\tau |k|^4} \le \alpha_1 |k|^{-s-2}. 
\end{align}
Thus $\|v \|_{H^{s+0.9}(\mathbb T^2)}$ is bounded.
\end{proof}

\begin{lem} \label{Ap3}
Assume $0<\tau\le 1$ and $4\le J\tau <5$. Then
\begin{align}
\| T_0^J g \|_{H^{16}(\mathbb T^2)} \lesssim \| g\|_{2}. 
\end{align}

\end{lem}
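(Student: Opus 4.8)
The goal is to show the discrete heat-type semigroup $T_0^J$, with symbol $\left(\frac{1}{1+\tau|k|^4+\tau\theta_c|k|^2}\right)^J$, enjoys a strong smoothing estimate once the total elapsed time $J\tau$ is of order one. The plan is to reduce everything to a pointwise multiplier bound on the Fourier side, since $T_0$ is a Fourier multiplier and $\|\cdot\|_{H^{16}}$ is equivalent to $\|\langle k\rangle^{16}\widehat{\,\cdot\,}(k)\|_{\ell^2_k}$. Thus it suffices to prove that
\begin{align*}
\sup_{k\in\mathbb Z^2}\,\langle k\rangle^{16}\left(\frac{1}{1+\tau|k|^4+\tau\theta_c|k|^2}\right)^J\lesssim 1,
\end{align*}
which immediately gives $\|T_0^J g\|_{H^{16}}\lesssim\|\langle k\rangle^{0}\widehat g\|_{\ell^2}\lesssim\|g\|_2$.

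First I would fix $k\neq 0$ and write $m(k)=\left(1+\tau|k|^4+\tau\theta_c|k|^2\right)^{-1}$. The constraint $0<\tau\le 1$ together with the earlier solvability condition $\tau\le 2/\theta_c^2$ guarantees $1+\tau|k|^4+\tau\theta_c|k|^2\ge 1$, so $0<m(k)\le 1$ and in fact the denominator is comparable to $1+\tau|k|^4$ uniformly in $k$. The heart of the matter is to trade the large power $J$ (with $J\tau\ge 4$) against powers of $|k|$. I would split into the low-frequency and high-frequency regimes according to whether $\tau|k|^4\le 1$ or $\tau|k|^4>1$.

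In the high-frequency regime $\tau|k|^4>1$ one has $m(k)\le \frac{1}{1+\tau|k|^4}\le \frac12$, and more usefully $m(k)\le \frac{2}{\tau|k|^4}$, so
\begin{align*}
\langle k\rangle^{16}\,m(k)^J\le \langle k\rangle^{16}\left(\frac{2}{\tau|k|^4}\right)^{J}\lesssim |k|^{16}\,(\tau|k|^4)^{-J}.
\end{align*}
Since $J\ge 4/\tau\ge 4$ and $\tau|k|^4>1$, the factor $(\tau|k|^4)^{-J}$ decays super-polynomially; keeping just $J\ge 4$ of the decay kills the $|k|^{16}$ growth, and the residual powers of $(\tau|k|^4)^{-1}<1$ only help. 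In the low-frequency regime $\tau|k|^4\le 1$ I would instead use the elementary bound $m(k)\le e^{-c\tau|k|^4}$ for some absolute $c>0$ (valid because $\frac{1}{1+x}\le e^{-x/2}$ for $0\le x\le 1$), whence $m(k)^J\le e^{-cJ\tau|k|^4}\le e^{-4c|k|^4}$ using $J\tau\ge 4$; the Gaussian-type decay $e^{-4c|k|^4}$ absorbs $\langle k\rangle^{16}$ with a uniform constant. Combining the two regimes (and handling $k=0$ trivially, where the symbol equals $1$) yields the uniform multiplier bound, and hence the lemma.

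The main obstacle, and the only genuinely delicate point, is making the exponent bookkeeping uniform across the crossover $\tau|k|^4\sim 1$: one must ensure the constants do not degenerate as $\tau\to 0$ with $J\to\infty$ along $J\tau\in[4,5)$. This is precisely why the hypothesis is phrased in terms of the product $J\tau$ rather than $J$ or $\tau$ separately — it is the conserved ``physical time'' that controls the smoothing, and both regime estimates above are driven by $J\tau\ge 4$ rather than by $J$ alone. I expect no serious difficulty beyond carefully choosing the split and invoking $\sup_{x\ge 0} x^{4}e^{-cx}\lesssim 1$ (equivalently $\sup_{x\ge0}\langle x\rangle^{16} e^{-4cx^{4}}<\infty$) to close the low-frequency case; the upper bound $J\tau<5$ is not needed for this estimate and merely reflects how the lemma will be applied.
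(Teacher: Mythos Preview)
Your overall strategy---reduce to a pointwise multiplier bound and then show $\langle k\rangle^{16}m(k)^J\lesssim 1$ uniformly---is exactly right, and your low-frequency argument is clean and correct. There is, however, a genuine gap in your high-frequency case. When you pass from $m(k)\le\frac{2}{\tau|k|^4}$ to $m(k)^J$, you pick up a factor $2^J$; since $J$ can be as large as $5/\tau$, this is not harmless. Even if you drop the factor $2$ and use $m(k)\le(\tau|k|^4)^{-1}$, your sentence ``keeping just $J\ge 4$ of the decay kills the $|k|^{16}$ growth'' only yields
\[
|k|^{16}(\tau|k|^4)^{-J}=\tau^{-4}\,(\tau|k|^4)^{4-J},
\]
and the residual factor $(\tau|k|^4)^{4-J}<1$ need not compensate the $\tau^{-4}$ near the crossover $\tau|k|^4\downarrow 1$. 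So the bound you obtain is $\lesssim\tau^{-4}$ (or worse, with the $2^J$), not $\lesssim 1$. This is precisely the uniformity issue you flag in your last paragraph, but your split at $\tau|k|^4=1$ does not resolve it.

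The paper avoids the split entirely by a single monotonicity trick: writing
\[
(1+\tau|k|^4)^{-J}=\Bigl(1+\tfrac{J\tau\,|k|^4}{J}\Bigr)^{-J}\le\Bigl(1+\tfrac{4|k|^4}{J}\Bigr)^{-J}\le(1+|k|^4)^{-4},
\]
where the first inequality uses $J\tau\ge 4$ and the second uses that $x\mapsto(1+a/x)^x$ is increasing together with $J\ge 4$. This immediately gives $\langle k\rangle^{16}(1+|k|^4)^{-4}\lesssim 1$ with no case analysis. Your approach is salvageable---for instance, moving the split point to $\tau|k|^4=3$ and using $\tfrac{1}{1+x}\le e^{-cx}$ on $[0,3]$ makes both regimes close---but the monotonicity argument is shorter. (A minor slip: the symbol of $T_0$ is $(1+\tau|k|^4-\tau\theta_c|k|^2)^{-1}$, with a minus sign on the $\theta_c$ term; this does not affect the strategy but requires the solvability constraint to recover a bound of the form $\lesssim(1+\tau|k|^4)^{-1}$.)
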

\begin{proof}
Observe that $J\ge 4$ and 
\begin{align}
|\widehat{T_0^J}(k) | \le (1+|k|^4 \frac {J\tau}J )^{-J} 
\le (1+ |k|^4 \frac 4 J )^{-J} \le (1+|k|^4)^{-4}.
\end{align}
Here we used the simple inequality that $h(x)= (1+\frac 1 x )^x$ is monotonically increasing
in $x>0$. Clearly the desired inequality then follows.
\end{proof}

\subsubsection{Higher order estimates}

Now we discuss two cases.
\vspace{0.25cm}

Case $1$: $n\tau \le 20$.  In this case we take $J=n$. By Lemma \ref{Ap2}, we have
\begin{align*}
\| |k|^5 \widehat{ u^{n+1}} (k) \|_{l_k^{\infty} } \le C. 
\end{align*}
By a similar estimate we also obtain
\begin{align*}
\sup_{0\le j \le n+1} \| |k|^5 \widehat{u^{j}}(k) \|_{l_k^{\infty}} \le C.
\end{align*}
It follows that 
\begin{align*}
\sup_{0\le j\le n+1} \| |k|^5 \widehat{ \tilde 
 (u^j)} (k) \|_{l_k^{\infty}} \le C. 
\end{align*}
By using Lemma \ref{Ap2} again, we obtain $\|u^{n+1} \|_{H^5} \le C$. 

\vspace{0.25cm}

Case $2$: $n\tau >20$.  Since $0<\tau\le 1$, we can choose an integer 
$J\ge 4$ such that $4\le J\tau <5$. By using Lemma \ref{Ap3}, we have
\begin{align*}
\| T_0^{J+1} u^{n-J} \|_{H^{10}} \le C.
\end{align*}
The inhomogeneous term containing $f^{n-j}$ can be handled in the same
way as in Case $1$.  Thus in this case we obtain
$\| u^{n+1} \|_{H^5} \le C$. This then completes the whole induction
step.  Theorem \ref{t1} is now proved. 

\section{Proof of Corollary \ref{c1}}
In this section we give the proof of Corollary \ref{c1}. Consider first $s=5$. 
For each small $\tau>0$ we denote $u^{n,\tau}=u^n$ as the numerical solution obtained with the
help of Theorem \ref{t1}.  Define $v^{(\tau)} \in C_t^0 H_x^5$ such that
\begin{align}
v^{(\tau)} (t) :=
\begin{cases}
u^{n,\tau}, \quad \text{if $t=n\tau$, $n\ge 0$}, \\
(n+1 - \frac t{\tau})u^{n,\tau} + (\frac t{\tau}-n)u^{n+1,\tau}, \quad \text{if $n\tau \le t<(n+1)\tau$, $n\ge 0$}.
\end{cases}
\end{align}
In yet other words, $v^{(\tau)}$ is the piece-wise linear interpolation of $(u^{n,\tau})_{n\ge 0}$.
Observe that for each $t \in (n\tau, (n+1)\tau)$, we have
\begin{align} \label{Au5e12a}
\partial_t v^{(\tau)} =
\frac {u^{n+1,\tau }-u^{n,\tau}}
{\tau}= -\nu \Delta^2 u^{n+1,\tau} -\theta_c \Delta u^{n+1,\tau} +
\Delta (\tilde f(u^{n,\tau} ) ).
\end{align}

By Theorem \ref{t1}, we have 
\begin{align*}
\sup_{0<\tau\le \tau_0}( \| v^{(\tau)} \|_{C_t^0 H^5_x}+
\| \partial_t v^{(\tau)} \|_{L_t^{\infty} H_x^1}  )\lesssim 1.
\end{align*}
In the above, the norms are evaluated on the space-time slab $[0,\infty)\times \mathbb T^2$, and
we have used the fact that the quotients $\frac{u^{n+1,\tau} -u^{n,\tau}} {\tau}$ are uniformly bounded in $L_t^{\infty} H_x^1$.

To proceed further, we need the following variant of
the usual Aubin-Lions compactness lemma. 

\begin{lem} \label{Au5lem1}
Suppose $(v_n)_{n\ge 1}$ is a sequence of functions defined on $(t,x) \in [0,\infty)
\times \mathbb T^2$ such that $v_n \in C_t^0 H^5_x$, $\partial_t v_n \in L_t^{\infty} H_x^1$
and 
\begin{align}
\sup_{n\ge1 } \left( \| v_n \|_{C_t^0 H^5_x}+
\| \partial_t v_n \|_{L_t^{\infty} H_x^1}  \right)\lesssim 1.
\end{align}

Then there exists
$v_{*} \in C_t^0 H^5_x$ with $\partial_t v_{*} \in L_t^{\infty} H_x^1$, and a subsequence
$n_j$, such that for any given $T>0$, and any $1\le s^{\prime}<5$, 
\begin{align*}
\|v_{n_j} -v_*\|_{C_t^0 H^{s^{\prime}}_x([0,T]\times \mathbb T^2)}  \to 0, 
\qquad \text{as $j\to \infty$.}
\end{align*}

\end{lem}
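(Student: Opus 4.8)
The plan is to prove this as an Arzel\`a--Ascoli statement for the family $\{t\mapsto v_n(t)\}$ regarded as maps from a compact time interval into the Banach space $H^{s'}_x(\mathbb T^2)$, and then to upgrade it to the whole half-line $[0,\infty)$ by a diagonal extraction. Arzel\`a--Ascoli requires two ingredients: (i) pointwise precompactness of $\{v_n(t)\}_n$ in $H^{s'}_x$ for each fixed $t$, and (ii) equicontinuity in $t$ with values in $H^{s'}_x$. Ingredient (i) is immediate: for each fixed $t$ the set $\{v_n(t)\}_n$ is bounded in $H^5_x$, and since $\mathbb T^2$ is compact, the embedding $H^5_x\hookrightarrow H^{s'}_x$ is compact for every $s'<5$ by Rellich--Kondrachov, so the set is precompact in $H^{s'}_x$.

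First I would establish the time-equicontinuity. Writing the difference as a Bochner integral of the time derivative, for $t_1,t_2\in[0,T]$ one has
\[
\|v_n(t_2)-v_n(t_1)\|_{H^1}\le \int_{t_1}^{t_2}\|\partial_t v_n(s)\|_{H^1}\,ds\le C\,|t_2-t_1|,
\]
so the family is uniformly Lipschitz into $H^1_x$. Interpolating this bound against the uniform $H^5_x$ bound, with $s'=(1-\theta)\cdot 1+\theta\cdot 5$, i.e.\ $\theta=(s'-1)/4\in[0,1)$, gives
\[
\|v_n(t_2)-v_n(t_1)\|_{H^{s'}}\lesssim \|v_n(t_2)-v_n(t_1)\|_{H^1}^{1-\theta}\,\|v_n(t_2)-v_n(t_1)\|_{H^5}^{\theta}\lesssim |t_2-t_1|^{(5-s')/4},
\]
which is uniform H\"older continuity into $H^{s'}_x$, hence equicontinuity. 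In practice it suffices to carry this out for one intermediate value, say $s'=4$.

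With (i) and (ii) in hand, Arzel\`a--Ascoli yields on each slab $[0,T]\times\mathbb T^2$ a subsequence converging in $C^0_tH^{s'}_x$. I would perform the extraction once at $s'=4$ for $T=1,2,3,\dots$ and take the diagonal subsequence $n_j$, producing a single limit $v_*\in C^0_tH^4_x$ with $v_{n_j}\to v_*$ in $C^0([0,T];H^4_x)$ for every $T$. To see $v_*$ has the claimed regularity, I use lower semicontinuity of the $H^5_x$-norm under the weak convergence $v_{n_j}(t)\rightharpoonup v_*(t)$ to obtain $\sup_t\|v_*(t)\|_{H^5}\le C$, and I pass the uniform $H^1$-Lipschitz bound to the limit so that the difference quotients of $v_*$ are bounded in $H^1_x$, giving $\partial_t v_*\in L^\infty_tH^1_x$. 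Finally, convergence for every $1\le s'<5$ follows from the single $H^4$-convergence: for $s'\le 4$ it is trivial, while for $4<s'<5$ one interpolates $v_{n_j}-v_*$ between its vanishing $H^4$-norm and its bounded $H^5$-norm, exactly as in the equicontinuity step.

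The genuinely nontrivial points, and hence where I expect the main work to lie, are the two uniform-in-time features: extracting one subsequence that works on the entire half-line (handled by the diagonal argument over $T=1,2,\dots$), and obtaining convergence in $C^0_t$ rather than merely $L^p_t$. The latter is precisely what the $L^\infty_tH^1_x$ bound on $\partial_t v_n$ buys through the Lipschitz-in-time estimate, and the interpolation trick is what converts that low-regularity ($H^1$) time continuity, together with the high spatial regularity ($H^5$), into honest equicontinuity in the target space $H^{s'}_x$. Verifying that the limit retains the full $H^5_x$ and $L^\infty_tH^1_x$ structure is routine but should be recorded carefully, since it is exactly this that lets the scheme's uniform bounds survive the passage $\tau\to 0$ in the proof of Corollary \ref{c1}.
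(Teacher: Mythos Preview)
Your argument is correct and complete, but it differs from the paper's proof in its mechanism for extracting compactness. The paper does not invoke the vector-valued Arzel\`a--Ascoli theorem with Rellich compactness; instead it works Fourier-mode-by-Fourier-mode: for each fixed $k\in\mathbb Z^2$ it applies scalar Arzel\`a--Ascoli to $t\mapsto \widehat{v_n}(t,k)$ on $[0,1]$, then runs a diagonal argument over $k$ to produce a single subsequence with $\widehat{v_{n_j}}(t,k)\to\widehat{v_*}(t,k)$ uniformly in $t$ for every $k$. The pointwise decay $|\widehat{v_n}(t,k)|\lesssim(1+|k|)^{-5}$ (from the uniform $H^5$ bound) then upgrades this to convergence in $L^\infty_t\ell^2_k$, i.e.\ $C^0_tL^2_x$, and a final interpolation against the $H^5$ bound gives $C^0_tH^{s'}_x$ for $s'<5$. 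Your route, by contrast, packages the same two inputs---time-Lipschitz in $H^1$ and uniform boundedness in $H^5$---into a single interpolation that yields equicontinuity in $H^{s'}$ directly, and then appeals to Rellich for pointwise precompactness. Your approach is the more standard Aubin--Lions template and would transfer verbatim to domains without a Fourier basis; the paper's approach is more bare-hands and specific to $\mathbb T^2$, but avoids citing Rellich or the Banach-valued Arzel\`a--Ascoli theorem. You are also slightly more thorough than the paper in recording why the limit inherits the $L^\infty_tH^5_x$ bound and the $L^\infty_tH^1_x$ time derivative; the paper's proof is silent on these points even though they appear in the statement.
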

\begin{proof}[Proof of Lemma \ref{Au5lem1}]
Without loss of generality we assume $T=1$. From the argument below together with a further diagonal argument one can easily cover the general case $T>0$.

First fix any $k\in \mathbb Z^2$ and consider $\widehat {v_n}(t,k)$ on the time interval $[0,1]$. 
By using Arzel\`a-Ascoli and using a diagonal argument one can extract a subsequence which we denote as $\widehat{v_{n_j}}(t,k)$
such that it converges to a continuous (in $t$) function $\widehat{v_*}(t,k)$ on $[0,1]$ for
any fixed $k$. Furthermore, since $|\widehat{v_n}(t,k)|\lesssim (1+|k|)^{-5}$, one can obtain
\begin{align*}
\|\widehat{v_{n_j}}(t,k) - \widehat{v_*}(t,k)\|_{L_t^{\infty} l_k^2} \to 0,
\qquad \text{as $j\to \infty$.}
\end{align*}
 By using
interpolation one can  obtain the strong convergence in $C_t^0 H^{s^{\prime}}$ for any
$s^{\prime}<5$. 
\end{proof}

By using Lemma \ref{Au5lem1}, we obtain that along some sequence $\tau_j \to 0$, 
there exits $u \in C_t^0 H^5$ with $\partial_t u \in L_t^{\infty} H^4$ , such that
as $j\to \infty$, 
\begin{align}
\| v^{(\tau_j)} - u \|_{C_t^0 H^{4.5} ([0, T]\times \mathbb T^2)} \to 0,
\qquad \forall\, T>0. 
\end{align}
Now by \eqref{Au5e12a}, it is not difficult to check that for any test function $\psi \in
C_c^{\infty}((0,T)\times \mathbb T^2)$, we have 
\begin{align}
&\left|\int_{(0,T)\times \mathbb T^2}  \psi(t,x) \Bigl(  \partial_t v^{(\tau)}  +\nu \Delta^2 v^{(\tau)} 
+\theta \Delta v^{(\tau)} -\Delta (\tilde f(v^{(\tau) } ) \Bigr) dx dt\right|  \notag\\
\le&\;  O(\tau) \to 0, \qquad \text{as $\tau\to 0$}. 
\end{align}

This together with the regularity of $u$ implies that $u$ is the desired solution. Note
that the strict phase separation and uniform Sobolev regularity of $u$ on the time
interval $[0,\infty)$ follows by taking the limit. Thanks to strict phase separation, it is routine
to check that our constructed solution is unique.  
We note that the general case $s>5$ can be obtained by a simple bootstrapping argument.
We omit further details here and leave them to interested readers.

\section{Proof of Theorem \ref{t2}}
In this section we carry out the error estimate in $L^2$.
\subsection{Auxiliary $L^2$ error estimate for near solutions}
Consider
\begin{align} \label{ech_e4.1}
\begin{cases}
\displaystyle\frac{v^{n+1}-v^n}{\tau} = - \nu \Delta^2 v^{n+1} - \theta_c \Delta v^{n+1}+\Delta  \tilde f(v^n) +\Delta \tilde
G_n^1, \quad n\ge 0,\\
\displaystyle\frac{\tilde v^{n+1} -\tilde v^n} {\tau} =-\nu \Delta^2 \tilde
v^{n+1} -\theta_c \Delta \tilde v^{n+1} + \Delta  \tilde f(\tilde
v^n)+
\Delta \tilde G^n_2, \qquad n\ge 0,\\
v^0=v_0, \quad \tilde v^0=\tilde v_0,
\end{cases}
\end{align}
where $v_0$ and $\tilde v_0$ have the same mean and 
$\tilde f(z)=\frac{\theta}2 \ln(\frac{1+z}{1-z})$.  For simplicity we shall make
a slightly stronger assumption
$$0<\tau\le \frac {\nu}{4\theta_c^2}$$
so that the operator $(1+\tau \nu \Delta^2 + \theta_c \tau \Delta )$ is  invertible
and consequently $v^{n+1}$, $\tilde v^{n+1}$ are well-defined for all $n\ge 0$. 
Denote $\tilde G^n=\tilde G^n_1-\tilde G^n_2$.

\begin{prop} \label{propy2}
Assume $0<\tau\le \frac{\nu}{4\theta_c^2}$. 
For solutions of \eqref{ech_e4.1}, assume for some $\delta_1\in (0,1)$, 
\begin{align}
&\sup_{n\ge 0} \|  v^n \|_{\infty} \le 1-\delta_1, \quad \sup_{n\ge 0}
\|\tilde v^n\|_{\infty}\le 1-\delta_1.
\end{align}
Then for any $m\ge 1$, we have
\begin{align}
& \| v^m -\tilde v^m \|_2^2 
\le \; \exp\Bigl({m\tau\cdot
\frac{C_1}{\nu}}\Bigr)\cdot \Bigl( \|  v_0-\tilde
v_0\|_2^2 +
\frac{2\tau}{\nu} \sum_{n=0}^{m-1} \| \tilde G^n \|_2^2
\Bigr),
\end{align}
where $C_1>0$ is a constant depending only on $(\delta_1,\theta,\theta_c)$.
\end{prop}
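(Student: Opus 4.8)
The plan is to reduce the claim to a one-step recursion for $b_n:=\|v^n-\tilde v^n\|_2^2$ and to close it with a discrete Gr\"onwall inequality. Writing $w^n=v^n-\tilde v^n$ and subtracting the two equations in \eqref{ech_e4.1} gives
\begin{align*}
\frac{w^{n+1}-w^n}{\tau}=-\nu\Delta^2 w^{n+1}-\theta_c\Delta w^{n+1}+\Delta\bigl(\tilde f(v^n)-\tilde f(\tilde v^n)\bigr)+\Delta\tilde G^n.
\end{align*}
Since the right-hand side is a Laplacian the mean of $w^n$ is conserved, and as $v_0,\tilde v_0$ share the same mean we have $\overline{w^n}=0$ (the $L^2$ pairing below does not actually need this). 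Because the desired conclusion is an $L^2$ bound, I would test this identity against $w^{n+1}$ in $L^2$ rather than in $H^{-1}$. The left-hand side yields the telescoping quantity $\frac{1}{2\tau}\bigl(b_{n+1}-b_n+\|w^{n+1}-w^n\|_2^2\bigr)$; the fourth-order term produces the good dissipation $-\nu\|\Delta w^{n+1}\|_2^2$; the term $-\theta_c\Delta w^{n+1}$ produces $+\theta_c\|\nabla w^{n+1}\|_2^2$; and the two inhomogeneous terms, after integrating by parts twice on the torus, become $\langle \tilde f(v^n)-\tilde f(\tilde v^n)+\tilde G^n,\,\Delta w^{n+1}\rangle$.

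Two elementary estimates then drive the argument. First, the strict phase separation hypothesis $\|v^n\|_\infty,\|\tilde v^n\|_\infty\le 1-\delta_1$ makes $\tilde f$ Lipschitz on the relevant strip, with $|\tilde f(v^n)-\tilde f(\tilde v^n)|\le L|w^n|$ where $L=\sup_{|z|\le 1-\delta_1}\tilde f'(z)=\theta/(\delta_1(2-\delta_1))$ depends only on $(\delta_1,\theta)$; this turns the nonlinear contribution into $L\|w^n\|_2\,\|\Delta w^{n+1}\|_2$. Second, the interpolation inequality $\|\nabla w^{n+1}\|_2^2\le\|w^{n+1}\|_2\,\|\Delta w^{n+1}\|_2$ lets me absorb the destabilizing term $\theta_c\|\nabla w^{n+1}\|_2^2$. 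Spending three portions of $\tfrac{\nu}{4}\|\Delta w^{n+1}\|_2^2$ via Young's inequality on the $\theta_c$-term, the nonlinear term, and the forcing term, and discarding the remaining nonnegative dissipation, I arrive at
\begin{align*}
\frac{b_{n+1}-b_n}{2\tau}\le \frac{\theta_c^2}{\nu}\,b_{n+1}+\frac{L^2}{\nu}\,b_n+\frac1\nu\|\tilde G^n\|_2^2,
\end{align*}
which rearranges to the implicit recursion $\bigl(1-\tfrac{2\tau\theta_c^2}{\nu}\bigr)b_{n+1}\le\bigl(1+\tfrac{2\tau L^2}{\nu}\bigr)b_n+\tfrac{2\tau}{\nu}\|\tilde G^n\|_2^2$.

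This is the one place where the time-step restriction is essential: the hypothesis $\tau\le\nu/(4\theta_c^2)$ guarantees $1-2\tau\theta_c^2/\nu\ge\tfrac12>0$, so the implicit coefficient can be divided out, and $\tfrac{1+c}{1-a}\le 1+2(a+c)\le\exp\bigl(C_1\tau/\nu\bigr)$ with $a=2\tau\theta_c^2/\nu$, $c=2\tau L^2/\nu$ gives a constant $C_1$ depending only on $(\delta_1,\theta,\theta_c)$. Iterating from $n=0$ to $m-1$ and bounding $(1+C_1\tau/\nu)^m\le\exp(C_1m\tau/\nu)$ then yields the stated inequality (after, if needed, a harmless enlargement of $C_1$ to absorb the $(1-a)^{-1}$ factor on the forcing sum). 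I expect no genuine analytic difficulty here; the only real subtlety is the bookkeeping around the \emph{implicitly treated} destabilizing term $-\theta_c\Delta w^{n+1}$, whose absorption by the fourth-order dissipation feeds a $b_{n+1}$ contribution into the right-hand side and thereby makes the positivity of the leading coefficient $1-2\tau\theta_c^2/\nu$ delicate. It is precisely this positivity that the assumption $\tau\le\nu/(4\theta_c^2)$ secures, with everything else reducing to Cauchy--Schwarz, Young, and discrete Gr\"onwall.
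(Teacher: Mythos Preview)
Your proof is correct and follows essentially the same route as the paper: test the difference equation against $w^{n+1}$ in $L^2$, use the Lipschitz bound on $\tilde f$ from strict phase separation, absorb cross terms into $\nu\|\Delta w^{n+1}\|_2^2$ via Young, and close with discrete Gr\"onwall under $\tau\le \nu/(4\theta_c^2)$. The only cosmetic difference is that the paper converts the implicit $\|e^{n+1}\|_2^2$ contribution into an explicit one by writing $\|e^{n+1}\|_2^2\le 2\|e^{n+1}-e^n\|_2^2+2\|e^n\|_2^2$ and absorbing the increment with the $\tfrac{1}{2\tau}\|e^{n+1}-e^n\|_2^2$ term (this is where the same time-step bound enters), whereas you divide through by $1-2\tau\theta_c^2/\nu$; the paper's version lands exactly on the $\tfrac{2\tau}{\nu}$ prefactor in the forcing sum, while your division picks up a harmless extra factor of at most $2$ there that cannot literally be absorbed into $C_1$.
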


\subsubsection*{Proof of Proposition \ref{propy2}}
Denote $e^n= v^n - \tilde v^n$. Then
\begin{align}
\frac{e^{n+1}-e^n}{\tau} = -\nu \Delta^2 e^{n+1}
-\theta_c \Delta e^{n+1} +\Delta (\tilde f(v^n)-\tilde f(\tilde v^n)) +\Delta \tilde
G^n.
\end{align}
Taking the $L^2$ inner product with $ e^{n+1}$ on both sides, we get
\begin{align}
 & \frac 1 {2\tau} ( \| e^{n+1} \|_2^2 -
 \|  e^n \|_2^2 + \| e^{n+1}-e^n\|_2^2 ) + \nu \|\Delta e^{n+1} \|_2^2 
 +\theta_c \langle e^{n+1}, \Delta e^{n+1} \rangle \notag \\
 =& \; \langle \tilde G^n, \Delta e^{n+1} \rangle 
  + \langle \tilde f(v^n)-\tilde f(\tilde v^n), \Delta  e^{n+1} \rangle. \label{yy30}
 \end{align}
By the Cauchy-Schwartz inequality, we have
\begin{align}
|(\tilde G^n, \Delta e^{n+1})| &\le \frac{ \| \tilde G^n\|_2^2} {\nu} + \frac{\nu} 4 \| \Delta e^{n+1} \|_2^2, \notag\\
|\theta_c \langle e^{n+1}, \Delta e^{n+1} \rangle|
&\le \frac{\theta_c^2}{\nu} \| e^{n+1}\|_2^2 + \frac{\nu} 4
\| \Delta e^{n+1}\|_2^2 \notag \\
&\le \frac{2\theta_c^2}{\nu} \| e^{n+1}-e^n\|_2^2 + 
\frac{2\theta_c^2}{\nu} \| e^n\|_2^2+ \frac{\nu} 4
\| \Delta e^{n+1}\|_2^2.
\end{align}
Since $\max\{\|v^n\|_{\infty},\|\tilde v^n \|_{\infty} \} \le 1-\delta_1$,  we have
\begin{align}
&\|\tilde f(v^n) -\tilde f(\tilde v^n) \|_2 \le C \cdot \|e^n\|_2, \notag\\
&|\langle \tilde f(v^n)-\tilde f(\tilde v^n), \Delta  e^{n+1} \rangle|\le
\frac {\nu}4 \|\Delta e^{n+1}\|_2^2 + \frac{C^2}{\nu} \|e^n\|_2^2,
\end{align}
where $C>0$ depends only on $\delta_1$ and $\theta$. 

Collecting the estimates, we get
\begin{align}
&\frac{\|e^{n+1}\|_2^2 - \| e^n\|_2^2}{2\tau} + (\frac 1{2\tau}-\frac{2\theta_c^2}
{\nu})\| e^{n+1}-e^n\|_2^2  \notag \\
\le&\; \frac {1}\nu \|\tilde G^n\|_2^2 +  \frac{C^2+2\theta_c^2}{\nu} \|e^n\|_2^2.
\end{align}

\noindent
Define
\begin{align*}
y_n =\|e^n\|_2^2, \quad \tilde \alpha= \frac{2C^2+4\theta_c^2} {\nu},
\quad \tilde \beta_n= \frac{2}{\nu}\|\tilde G^n\|_2^2.
\end{align*}

\noindent
Then obviously
\begin{align*}
\frac{y_{n+1}-y_n}{\tau} \le \tilde \alpha y_n +\tilde \beta_n.
\end{align*}

The desired result  follows from the standard Gronwall inequality. \qed

\vspace{0.25cm}
\noindent
Next we state and prove two lemmas needed for the proof of Theorem \ref{t2}.

\begin{lem}[Discretization of the PDE solution] \label{Ap5}
Let $t_n=n\tau$, $n\ge 0$. Let $u$ be the exact PDE solution to \eqref{1}.
Denote $\tilde f(z)= \frac {\theta} 2 \ln (\frac {1+z}{1-z} )$. 
 We have
\begin{align}
\frac{u(t_{n+1})-u(t_n)}{\tau}
= -\nu \Delta^2 u(t_{n+1}) -\theta_c \Delta u(t_{n+1}) + \Delta ( \tilde f(u(t_n)) )
+\Delta \tilde G^n_1,
\end{align}
where 
\begin{align*}
\tilde G^n_1 &= \frac{\nu}{\tau} \int_{t_n}^{t_{n+1}}
\partial_t \Delta u \cdot (t-t_n) dt +\frac{\theta_c}{\tau}
\int_{t_n}^{t_{n+1}} \partial_t u \cdot (t-t_n) dt \notag \\
&\quad+ 
 \frac 1 {\tau} \int_{t_n}^{t_{n+1}}
\partial_t (\tilde f(u)) \cdot(t_{n+1}-t) dt.
\end{align*}
Similarly for a slightly different discretization, we have
\begin{align} \label{Ap5_e2}
\frac{u(t_{n+1})-u(t_n)}{\tau}
= -\nu \Delta^2 u(t_{n+1}) + \Delta ( f(u(t_n)) )
+\Delta \tilde G^n_2,
\end{align}
where 
\begin{align*}
\tilde G^n_2 &= \frac{\nu}{\tau} \int_{t_n}^{t_{n+1}}
\partial_t \Delta u \cdot (t-t_n) dt + 
 \frac 1 {\tau} \int_{t_n}^{t_{n+1}}
\partial_t ( f(u)) \cdot(t_{n+1}-t) dt.
\end{align*}

\end{lem}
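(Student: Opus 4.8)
The plan is to obtain both identities as exact consequences of the fundamental theorem of calculus, with no appeal to smallness of $\tau$: I integrate the PDE over one time step and then convert the time-average of each term on the right-hand side into a pointwise endpoint value plus an integral remainder. Since $u$ solves \eqref{1}, I write $\partial_t u = \Delta K(t)$ with $K(t) = -\nu\Delta u(t) - \theta_c u(t) + \tilde f(u(t))$; integrating over $[t_n,t_{n+1}]$ and dividing by $\tau$ gives $\frac{u(t_{n+1})-u(t_n)}{\tau} = \Delta\cdot\frac{1}{\tau}\int_{t_n}^{t_{n+1}} K(t)\,dt$. It then remains only to replace each constituent of $K$ by its value at the appropriate endpoint.

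The two elementary identities I would record first come from integration by parts. For smooth $\psi$ on $[t_n,t_{n+1}]$,
\[
\frac{1}{\tau}\int_{t_n}^{t_{n+1}}\psi(t)\,dt = \psi(t_{n+1}) - \frac{1}{\tau}\int_{t_n}^{t_{n+1}}\psi'(t)(t-t_n)\,dt,
\]
which I apply to the implicitly treated terms $\psi=-\nu\Delta u$ and $\psi=-\theta_c u$, and
\[
\frac{1}{\tau}\int_{t_n}^{t_{n+1}}\psi(t)\,dt = \psi(t_n) + \frac{1}{\tau}\int_{t_n}^{t_{n+1}}\psi'(t)(t_{n+1}-t)\,dt,
\]
which I apply to the explicitly treated term $\psi=\tilde f(u)$. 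The first produces the endpoint values $-\nu\Delta u(t_{n+1})$ and $-\theta_c u(t_{n+1})$ together with the two $(t-t_n)$-weighted integrals in $\tilde G^n_1$; the second produces $\tilde f(u(t_n))$ together with the $(t_{n+1}-t)$-weighted integral. Collecting the three contributions inside the outer $\Delta$ and using $\Delta(-\nu\Delta u(t_{n+1})) = -\nu\Delta^2 u(t_{n+1})$ and $\Delta(-\theta_c u(t_{n+1})) = -\theta_c\Delta u(t_{n+1})$ reproduces the stated decomposition with exactly the claimed $\tilde G^n_1$.

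For the variant \eqref{Ap5_e2} I would group $f(u)=-\theta_c u+\tilde f(u)$ as a single explicitly treated nonlinearity: only $-\nu\Delta u$ stays implicit (handled by the first identity, contributing the $(t-t_n)$-weighted integral of $\partial_t\Delta u$), while all of $f(u)$ is treated by the second identity (contributing the $(t_{n+1}-t)$-weighted integral of $\partial_t f(u)$). This yields $\tilde G^n_2$ as written.

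There is no genuine analytic obstacle here, since the statement is an algebraic identity valid for any sufficiently regular $u$; the remainder terms are not estimated in the lemma itself, those bounds being deferred to the point where Proposition \ref{propy2} is invoked with $\tilde G^n=\tilde G^n_1$. The only care needed is bookkeeping: pairing the weight $(t-t_n)$ with the implicit terms and $(t_{n+1}-t)$ with the explicit one, and tracking the sign flips generated by $\psi=-\nu\Delta u$ and $\psi=-\theta_c u$, since a single sign slip would misplace a remainder. I would also note that the identity tacitly requires $u(t)\in(-1,1)$ throughout $[t_n,t_{n+1}]$ so that $\tilde f(u(t))$ and $\partial_t(\tilde f(u(t)))$ are well-defined, which is guaranteed by the strict phase separation from Corollary \ref{c1}.
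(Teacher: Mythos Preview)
Your proposal is correct and follows essentially the same argument as the paper: integrate the PDE over $[t_n,t_{n+1}]$, then use the two integration-by-parts identities to replace time averages by endpoint values plus weighted remainders, pairing $(t-t_n)$ with the implicit terms and $(t_{n+1}-t)$ with the explicit one. The only cosmetic difference is that you factor out one $\Delta$ up front (working with $K=-\nu\Delta u-\theta_c u+\tilde f(u)$), whereas the paper keeps the Laplacians distributed throughout; the computations are otherwise identical.
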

\begin{proof}
Integrating the PDE for $u$ on the time interval $[t_n,t_{n+1}]$, we obtain
\begin{align*}
\frac{u(t_{n+1})-u(t_n)} {\tau}
=\int_{t_n}^{t_{n+1}} \Bigl( -\nu \Delta^2 u(t) + \Delta( f(u(t) ) ) \Bigr) ds.
\end{align*}

Note that for a one-variable function $h=h(t)$, we have the formula
\begin{align} \notag
\frac 1 {\tau} \int_{t_n}^{t_{n+1}} h(t) dt & = h(t_n) +
\frac 1 {\tau} \int_{t_n}^{t_{n+1}} h^{\prime}(t) \cdot(t_{n+1}-t) dt, \\
\frac 1 {\tau} \int_{t_n}^{t_{n+1}} h(t) dt & = h(t_{n+1} ) + \frac 1 {\tau} \int_{t_n}^{t_{n+1}} h^{\prime}(t) \cdot(t_{n}-t) dt. \notag
\end{align}

By using the above formula, we have
\begin{align*}
& \int_{t_n}^{t_{n+1}}  (-\nu \Delta^2 u(t) ) dt=
- \nu \Delta^2 u(t_{n+1}) + \frac 1 {\tau}
\int_{t_n}^{t_{n+1}} \nu \partial_t \Delta^2 u(t) \cdot (t-t_n) dt;\\
& \int_{t_n}^{t_{n+1}}  (-\theta_c \Delta u(t) ) dt=
- \theta_c \Delta u(t_{n+1}) + \frac 1 {\tau}
\int_{t_n}^{t_{n+1}} \theta_c \partial_t \Delta u(t) \cdot (t-t_n) dt; \\
&\int_{t_n}^{t_{n+1}} \Delta(\tilde f(u(t) ) )dt = \Delta(\tilde f(u(t_n)) ) +
\frac 1 {\tau} \int_{t_n}^{t_{n+1}} \partial_t \Delta (\tilde f(u(t) ) ) \cdot (t_{n+1}-t) dt. 
\end{align*}
Thus 
\begin{align*}
\frac{u(t_{n+1})-u(t_n)}{\tau}
= -\nu \Delta^2 u(t_{n+1}) -\theta_c \Delta u(t_{n+1}) + \Delta ( \tilde f(u(t_n)) )
+\Delta \tilde G^n,
\end{align*}
where 
\begin{align*}
\tilde G^n &= \frac{\nu}{\tau} \int_{t_n}^{t_{n+1}}
\partial_t \Delta u \cdot (t-t_n) dt +\frac{\theta_c}{\tau}
\int_{t_n}^{t_{n+1}} \partial_t u \cdot (t-t_n) dt \notag \\
&\quad+ 
 \frac 1 {\tau} \int_{t_n}^{t_{n+1}}
\partial_t (\tilde f(u)) \cdot(t_{n+1}-t) dt.
\end{align*}
The derivation of \eqref{Ap5_e2} is similar. We omit details.
\end{proof}

\begin{lem} \label{Ap6}
Let $u$ be the PDE solution constructed in Corollary \ref{c1}. Then we have
\begin{align}
\int_0^{\infty} (\| \partial_t u \|_2^2  + \| \partial_t \Delta u\|_2^2 ) dt \lesssim 1.
\end{align}

\end{lem}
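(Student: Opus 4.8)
The plan is to read everything off the $H^{-1}$ gradient-flow structure together with the two facts supplied by Corollary \ref{c1}: strict phase separation $\sup_t\|u\|_{\infty}\le 1-\delta_1$ (so that $|f'(u)|=|{-\theta_c}+\tfrac{\theta}{1-u^2}|\le C$ uniformly) and the uniform bound $\sup_t\|u\|_{H^5}<\infty$. Writing $\mu=-\nu\Delta u+f(u)$ for the chemical potential, the equation is $\partial_t u=\Delta\mu$, and the energy identity \eqref{1.4} integrated in time gives the base estimate
\begin{align*}
\int_0^\infty \| |\nabla|^{-1}\partial_t u\|_2^2\,dt=\mathcal E(u_0)-\lim_{t\to\infty}\mathcal E(u(t))\lesssim 1,
\end{align*}
since $F$ is bounded and hence $\mathcal E$ is bounded below and non-increasing. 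This controls $\partial_t u$ only in $H^{-1}$, so the heart of the argument is a two-step bootstrap that lifts this to $L^2$ and then $H^2$ control of $\partial_t u$ in $L^2_t$.

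To do this I would differentiate the PDE in time. Setting $w=\partial_t u$ (note $\overline w=0$), one gets $\partial_t w=\Delta\big((-\nu\Delta+f'(u))w\big)$. In Step A, I test this against $(-\Delta)^{-1}w$: the left side is $\tfrac12\tfrac{d}{dt}\||\nabla|^{-1}w\|_2^2$, and integrating by parts on the right gives $-\nu\|\nabla w\|_2^2-\int f'(u)w^2\,dx\le-\nu\|\nabla w\|_2^2+C\|w\|_2^2$. Absorbing $C\|w\|_2^2$ via $\|w\|_2^2\le\||\nabla|^{-1}w\|_2\|\nabla w\|_2$ yields
\begin{align*}
\tfrac12\tfrac{d}{dt}\||\nabla|^{-1}w\|_2^2\le -\tfrac{\nu}2\|\nabla w\|_2^2+C'\||\nabla|^{-1}w\|_2^2.
\end{align*}
Integrating and using the base estimate together with $\||\nabla|^{-1}w(0)\|_2=\|\nabla\mu_0\|_2\lesssim\|u_0\|_{H^5}$, I obtain $\int_0^\infty\|\nabla\partial_t u\|_2^2\,dt\lesssim1$ and hence $\int_0^\infty\|\partial_t u\|_2^2\,dt\lesssim1$. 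In Step B, I test the same $w$-equation against $w$ itself, producing $\tfrac12\tfrac{d}{dt}\|w\|_2^2=-\nu\|\Delta w\|_2^2+\langle\Delta w,f'(u)w\rangle\le-\tfrac{\nu}2\|\Delta w\|_2^2+\tfrac{C^2}{2\nu}\|w\|_2^2$, and integrating with the boundary value $\|w(0)\|_2=\|\Delta\mu_0\|_2\lesssim\|u_0\|_{H^5}$ gives $\int_0^\infty\|\Delta\partial_t u\|_2^2\,dt\lesssim1$, which is exactly $\int_0^\infty\|\partial_t\Delta u\|_2^2\,dt\lesssim1$.

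The main obstacle is not the algebra but the rigorous justification of these time-differentiated identities, since Step B manipulates $\|\Delta w\|_2=\|\Delta^2\mu\|_2$, which formally costs two more derivatives than the available $H^5$ bound on $u$. I would resolve this by exploiting the parabolic smoothing of the fourth-order flow: for every $t>0$ the constructed solution is in $H^\infty$, so all norms appearing in Steps A and B are finite on any slab $[\epsilon,T]$ and the integrations by parts are legitimate there. I then integrate on $[\epsilon,T]$ and let $\epsilon\to0$, $T\to\infty$; the interior integrals converge to the full ones by monotone convergence, and the boundary contributions at $t=\epsilon$ converge to the finite quantities $\|\nabla\mu_0\|_2^2$ and $\|\Delta\mu_0\|_2^2$, whose finiteness is precisely where the hypothesis $u_0\in H^5$ is used (through $\mu_0\in H^3$). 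Alternatively, and perhaps more cleanly, I would carry out the identical two-step estimate at the level of the spectral-Galerkin approximations of the third Remark, where every term is manifestly finite and the bounds are uniform in $N$, and then pass to the limit $N\to\infty$; this avoids invoking smoothing altogether and keeps the argument fully self-contained.
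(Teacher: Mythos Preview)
Your argument is correct and is essentially the paper's own proof in different coordinates: the paper works with the chemical potential $\mu$ and tests the $\mu$-equation $\partial_t\mu=-\nu\Delta^2\mu-\theta_c\Delta\mu+\tilde f'(u)\Delta\mu$ first against $-\Delta\mu$ and then against $\Delta^2\mu$, which, via $w=\partial_t u=\Delta\mu$, is exactly your Step~A (test the $w$-equation with $(-\Delta)^{-1}w$) and Step~B (test with $w$). The only substantive difference is that the paper splits $f=-\theta_c u+\tilde f(u)$ and uses the \emph{sign} $\tilde f'(u)\ge 0$ to discard the nonlinear term in Step~A outright, whereas you keep $f'$ together and use the phase-separation bound $|f'(u)|\le C$ to absorb it; both are equally valid here.
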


\begin{proof}
The $L^2$-in time integrability of $\partial_t u$ comes from the energy identity, i.e.:
\begin{align*}
\frac d{dt} \mathcal E(u(t) ) = -\| \partial_t u \|_{L_x^2}^2 \quad
\Longrightarrow\; \mathcal E (u(T)) + \int_0^T \| \partial_t u\|_2^2 dt =\mathcal E(u_0),
\qquad \forall\, T>0.
\end{align*}
Sending $T\to \infty$ then yields $ \int_0^{\infty} \| \partial_t u \|_2^2 dt <\infty$. 

Next to obtain $L^2$-in time integrability of $\partial_t \Delta u$, we recall 
\begin{align}
\partial_t u = \Delta \mu = \Delta ( - \nu \Delta u -\theta_c u + \tilde f(u)  ).
\end{align}
Clearly 
\begin{align} \label{Ap6.2}
\partial_t \mu = - \nu \Delta^2 \mu - \theta_c \Delta \mu  +\tilde f^{\prime}(u ) \Delta \mu.
\end{align}
Thanks to strict phase separation, we have $\tilde f^{\prime}(u)\ge 0$. Multiplying both sides
of \eqref{Ap6.2} by $-\Delta \mu$ and integrating by parts, we obtain
\begin{align}
\frac 12 \partial_t ( \| \nabla \mu\|_2^2) \le - \nu \| \Delta \nabla \mu\|_2^2 
+ \theta_c \| \Delta \mu\|_2^2.
\end{align}
Integrating in time and using the fact that $\Delta \mu = \partial_t u \in L_t^2 L_x^2((0,\infty)
\times \mathbb T^2)$, we obtain
\begin{align*}
\int_0^{\infty} \| \Delta \nabla \mu \|_2^2 dt <\infty.
\end{align*}
Multiplying both sides of \eqref{Ap6.2} by $\Delta^2 \mu$ and integrating by parts,  we then
obtain
\begin{align*}
\int_0^{\infty} \| \Delta^2 \mu \|_2^2 dt <\infty.
\end{align*}
Here it should be noted that in deriving the above, we used the finiteness of $\| \Delta \mu\|_2$
which is clearly bounded since $u\in H^5$ and has strict phase separation.  Since 
$\partial_t \Delta u = \Delta^2 \mu$, we then obtain $\int_0^{\infty} \| \partial_t \Delta u\|_2^2 dt
<\infty$.
\end{proof}

\subsubsection*{Proof of Theorem \ref{t2}}

We need to consider
\begin{align}
\begin{cases}
\displaystyle\frac{ u^{n+1}- u^n} {\tau}
= - \nu \Delta^2  u^{n+1} -\theta_c \Delta u^{n+1}
+\Delta( \tilde f( u^n)), \\
\partial_t u = -\nu \Delta^2 u + \Delta( f(u)), \\
\tilde u^0 =u_0, \quad u(0)=u_0.
\end{cases}
\end{align}

\noindent
We first rewrite the PDE solution $u$ in the discretized form. By Lemma \ref{Ap5}, we have
\begin{align}
\frac{u(t_{n+1})-u(t_n)}{\tau}
= -\nu \Delta^2 u(t_{n+1}) -\theta_c \Delta u(t_{n+1}) + \Delta ( \tilde f(u(t_n)) )
+\Delta \tilde G^n,
\end{align}
where 
\begin{align*}
\tilde G^n &= \frac{\nu}{\tau} \int_{t_n}^{t_{n+1}}
\partial_t \Delta u \cdot (t-t_n) dt +\frac{\theta_c}{\tau}
\int_{t_n}^{t_{n+1}} \partial_t u \cdot (t-t_n) dt \notag \\
&\quad+ 
 \frac 1 {\tau} \int_{t_n}^{t_{n+1}}
\partial_t (\tilde f(u)) \cdot(t_{n+1}-t) dt.
\end{align*}
By using strict phase separation and uniform Sobolev regularity of $u$, we have
\begin{align}
\| \tilde G^n\|_2
&\le \nu \int_{t_n}^{t_{n+1}}
\| \partial_t \Delta u\|_2 dt
+ {\theta_c} \int_{t_n}^{t_{n+1}} \| \partial_t u \|_2 dt
+ \int_{t_n}^{t_{n+1}}
\| \partial_t u \|_2 dt  \cdot \sup_{|z|\le 1-\delta_1} |\tilde f^{\prime}(z) | \notag \\
&\lesssim \;  \left(\int_{t_n}^{t_{n+1}} (\| \partial_t \Delta u\|_2^2
+\| \partial_t u \|_2^2 )dt \right)^{\frac 12} \cdot \sqrt{\tau}. 
\end{align}
By Lemma \ref{Ap6}, we obtain
\begin{align}
\sum_{n=0}^{m-1} \| \tilde G^n\|_2^2
& \lesssim   \tau \int_0^{t_m} (\| \partial_t \Delta u\|_2^2 +\|\partial_t u\|_2^2) dt \lesssim
\tau.
\end{align}
Thus
by Proposition \ref{propy2}, we  get
\begin{align}
\|  u^m -u(t_m) \|_2^2 \lesssim e^{Ct_m} \tau^2.
\end{align}
Thus we obtain \eqref{1.21}.  \qed

\section{Concluding remarks}
In this paper we studied the Cahn-Hilliard equation with singular logarithmic potentials on the
two-dimensional periodic torus. We analyzed a first order in time, semi-implicit numerical discretization scheme  which treats the linear fourth-order dissipation term implicitly and the nonlinear
 term explicitly.  Prior state of the art literature are concerned with implicit or partially implicit
 methods for which phase separation and energy stability are established under nearly optimal
 conditions. For semi-implicit type methods, these issues were long standing open problems.
 In this work  we developed a new theoretical framework and  proved strict phase
 separation and energy stability for all time under mild constraints on the time step and initial data. 
We also carried out a rigorous error analysis which is done for the first time for semi-implicit
methods on Cahn-Hilliard equations with singular potentials.  It is expected our theoretical framework 
can be refined and  generalized to cover many other similar problems. 
Research is now underway
to investigate several directions including the stability and error analysis of higher-order methods, 
general thin-film type problems with singular potentials, various 
time-splitting methods,  and adaptive time-stepping methods.

\appendix

\frenchspacing
\bibliographystyle{plain}

\begin{thebibliography}{99}

\bibitem{AW07}
H. Abels and M. Wilke, Convergence to equilibrium for the Cahn-Hilliard equation with a logarithmic free energy, Nonlinear Anal. 67 (2007), 3176--3193.


\bibitem{CH58}
J. W. Cahn and J. E. Hilliard, Free energy of a nonuniform system. I. Interfacial free energy, J. Chem. Phys. 28 (1958), 258--267. 

\bibitem{CMZ11}
L.  Cherfils; A. Miranville; S.  Zelik.
The Cahn-Hilliard equation with logarithmic potentials.
Milan J. Math. 79 (2011), no. 2, 561--596. 



\bibitem{CE92}
M. Copetti and C. Elliott. Numerical analysis of the Cahn-Hilliard equation with a logarithmic free energy. Numer. Math., 63: 39--65, 1992


\bibitem{Wang19}
 W. Chen; C. Wang; X. Wang; S. Wise. Positivity-preserving, energy stable numerical schemes for the Cahn-Hilliard equation with logarithmic potential. 
 J. Comput. Phys. X 3 (2019), 100031.


\bibitem{DD95}
A. Debussche and L. Dettori, On the Cahn-Hilliard equation with a logarithmic free energy, Nonlinear Anal. 24 (1995), 1491--1514. 

\bibitem{EL91}
C.M. Elliott and S. Luckhaus, A generalized diffusion equation for phase separation of a multi-component mixture with interfacial energy, SFB 256 Preprint No. 195, University of Bonn, 1991. 


\bibitem{EC92}
M.I.M. Copetti and C.M. Elliott. Numerical analysis of the Cahn-Hilliard equation with a
 logarithmic free energy. Numer. Math., 63:39--65, 1992.

\bibitem{BEC96}
J.F. Blowey, M.I.M. Copetti, and C.M. Elliott. Numerical analysis of a model for phase 
separation of a multi-component alloy. IMA J. Numer. Anal., 16:111--139, 1996.


\bibitem{GL97}
 G. Giacomin and J.L. Lebowitz, Phase segregation dynamics in particle systems with long range interaction I. Macroscopic limits, J. Statist. Phys. 87 (1997), 37--61. 
 
 \bibitem{GL98}
G. Giacomin and J.L. Lebowitz, Phase segregation dynamics in particle systems with long range interaction II. Interface motion, SIAM J. Appl. Math. 58 (1998), 1707--1729. 

\bibitem{BH05}
P.W. Bates and J. Han, The Neumann boundary problem for a nonlocal Cahn-Hilliard equation, J. Diff. Eqns. 212 (2005), 235--277

\bibitem{GZ03}
H. Gajewski and K. Zacharias, On a nonlocal phase separation model, J. Math. Anal. Appl. 286 (2003), 11--31.

\bibitem{Ken95}
N. Kenmochi, M. Niezg\'odka and I. Pawlow, Subdifferential operator approach to the Cahn-Hilliard equation with constraint, J. Diff. Eqns. 117 (1995), 320--356.

\bibitem{MZ04}
A. Miranville and S. Zelik, Robust exponential attractors for Cahn-Hilliard type equations with singular potentials, Math. Methods Appl. Sci. 27 (2004), 545--582. 


\bibitem{Wang97}
P.W. Bates; P.C. Fife; X. Ren; X. Wang.
Traveling waves in a convolution model for phase transitions. 
Arch. Rational Mech. Anal. 138 (1997), no. 2, 105--136. 



\bibitem{Wu2}
H. Wu and T. Fukao.
Separation property and convergence to equilibrium for the equation and dynamic
 boundary condition of Cahn--Hilliard type with singular potential.
 ArXiv:1910.14177v1.



\bibitem{Wu1}
 G. Schimperna and  H. Wu. 
On a class of sixth-order Cahn--Hilliard type equations with logarithmic potential.
ArXiv:1909.01816v3. 


\bibitem{Mir17}
A. Miranville, The Cahn--Hilliard equation and some of its variants, AIMS Mathematics, 2 
(2017), 479--544.

\bibitem{Tang20}
T. Tang.
Revisit of Semi-Implicit Schemes for Phase-Field Equations.
ArXiv:2006.06990. 

\bibitem{Yang17}
X. Yang and G. Zhang.
Numerical approximations of the cahn-hilliard and allen-cahn equations with general nonlinear potential using the invariant energy quadratization approach. ArXiv:1712.02760v1.



\end{thebibliography}

\end{document}